\newtheorem{theorem}{Theorem}[section]
\newtheorem{lemma}[theorem]{Lemma}
\newtheorem{corollary}[theorem]{Corollary}
\newtheorem{defn}{Definition}[section]
\newtheorem{prop}{Proposition}
\newtheorem{remark}{Remark}
\newcommand{\bs}{\backslash}
\numberwithin{equation}{section}
\title{Cohomology at Infinity and the Well-Tempered Complex}
\author{Dylan Galt \and Mark McConnell}
\begin{document}
\maketitle
\begin{abstract}
   We prove the existence of a sequence of commutative diagrams generalizing the results on cohomology at infinity described in \cite{avner_cohomology_1997} to the context of the well-tempered complex introduced in \cite{mcconnell_computing_2020}. Our main theorem provides a method for computing in finite terms the action of Hecke operators on the cohomology of the Borel-Serre boundary for the $\textup{SL}_n$ symmetric space.
\end{abstract}
\section*{Introduction}
Let $\mathbf{G}$ be a reductive algebraic group defined over
$\mathbb{Q}$, and let $X$ be the symmetric space for
$\mathbf{G}(\mathbb{R})$.  Let $\Gamma\subseteq\mathbf{G}(\mathbb{Q})$
be an arithmetic subgroup. In general, the cohomology of the quotient
$\Gamma\backslash X$ is difficult to compute.  In the case
$\mathbf{G}=\textup{SL}_n$, there has been considerable work involving the well-rounded retract $\tilde{W}$ due to Ash
\cite{ash_small-dimensional_1984} to compute the cohomology of
$\Gamma\backslash X$, and also to compute the Hecke action on the
cohomology in certain degrees. (A short list of references relevant to
this paper is \cite{Manin, Stein, AR, Gun, avner_ash_cohomology_2010,
  ash_cohomology_2020, AGMY, GunMY}.  See
\cite{harder_cohomology_2017} for an approach in principle for
general~$\mathbf{G}$.)  The well-rounded retract is a cell complex
$\tilde{W}\subset X$ sitting inside the symmetric space as a spine on which
$\Gamma$ acts cellularly, and there is an explicit,
$\Gamma$-equivariant deformation retraction $X\rightarrow \tilde{W}$ called
the well-rounded retraction.

Given this well-rounded retract, there are several natural questions. A first is whether the map $X\rightarrow \tilde{W}$ extends as a $\Gamma$-equivariant deformation retraction $\overline{X}\rightarrow \tilde{W}$ to the Borel-Serre bordification $\overline{X}$ of $X$. This question was answered in the affirmative in \cite{avner_cohomology_1997}.  This shows that the well-rounded retract can be used to compute the canonical restriction map $H^\ast_\Gamma(\overline{X})\rightarrow H^\ast_\Gamma(\partial\overline{X})$ in equivariant cohomology for any coefficient module.

A second question is how to compute the Hecke action on the equivariant cohomology
$H^i_\Gamma(X)$ in all degrees~$i$. The
paper~\cite{mcconnell_computing_2020} extended the well-rounded
retract to the \emph{well-tempered complex}, a continuous family of cell
complexes parametrized by a real parameter~$\tau$ called the
temperament.  At each~$\tau$, the slice of the well-tempered
complex is a certain well-rounded retract as
in~\cite{ash_small-dimensional_1984}.  $\Gamma$ acts on the
well-tempered complex.  There are finitely many critical temperaments
where the cell-complex structure changes abruptly.  A Hecke operator
acting on $H^\ast_\Gamma(X)$ can be computed in finite terms as a
series of isomorphisms and quasi-isomorphisms between the slices of
the well-tempered complex at these critical temperaments.

It is natural to ask whether the techniques of \cite{avner_cohomology_1997} and \cite{mcconnell_computing_2020} can be combined to show that the well-tempered complex extends to the Borel-Serre boundary, yielding a generalization of the algorithm developed in \cite{mcconnell_computing_2020} to allow for the computation of Hecke operators on the cohomology of the Borel-Serre boundary.  In the present paper, we answer this question in the affirmative.

A key insight in \cite{avner_cohomology_1997} was that in order to extend the well-rounded retraction to the Borel-Serre boundary, one undertakes two steps. The first is to construct a family of neighborhoods of the Borel-Serre boundary components, in correspondence with $\mathbb{Q}$-parabolic subgroups $P$, each of which fibres over the corner associated to the given choice of $P$. One then shows that the well-rounded retraction is constant on these fibres. Intuitively, this gives a way to pull the Borel-Serre boundary into the interior of the symmetric space. The second step, using ideas of Saper \cite{saper_tilings_1997}, is to choose a $\Gamma$-equivariant tiling of the symmetric space $X$ with a $\Gamma$-invariant central tile that intersects each of these boundary neighborhoods. Having pulled the boundary into the interior, one may then apply the well-rounded retraction restricted to this central tile.

The main geometric construction of this paper is a one-parameter
family of boundary neighborhoods that vary continuously with the
temperament $\tau$ of the well-tempered complex. This allows us to
mimic step one of \cite{avner_cohomology_1997}, taking the changing
temperament into account. We then use a result from
\cite{saper_tilings_1997} to choose a one-parameter family of central
tiles that allows us to carry out step two. Our main theorem is the
existence of a sequence of cubical commutative diagrams which combine
the spectral sequences studied in \cite{avner_cohomology_1997} with
the algorithm of \cite{mcconnell_computing_2020}.  The
Hecke operators are found using a series of isomorphisms and quasi-isomorphisms
between the spectral sequences for the slices of the well-tempered
complex at the critical temperaments.

The result provides a method for computing in finite terms the action of Hecke operators on the cohomology of the Borel-Serre boundary of the symmetric space for $\textup{SL}_n$. We hope this will allow for a better understanding of interior cohomology and ghost classes, which are relevant to Langlands-type conjectures on the $L$-functions of certain Eisenstein series and cuspidal automorphic forms. In particular, one would hope to better understand whether certain cusp forms are lifts of those from algebraic subgroups of the special linear group, such as $\textup{Sp}_4$. This relates to our work on the cohomology of $\textup{Sp}_4$ in \cite{galt_computing_2021}, which constructs an acyclic cell complex for $\textup{Sp}_4$ and, using results from \cite{mcconnell_computing_2020}, yields the beginnings of an algorithm for the computation of Hecke operators on the cohomology of Siegel modular threefolds.

An outline of this paper is as follows. In Sections \ref{The
  Well-Rounded Retract}--\ref{Boundary Neighborhoods for a Fixed Set
  of Weights}, we present background material from
\cite{avner_cohomology_1997}. We present this at the level of
generality considered in that paper, although the case $k=\mathbb{Q}$, rather than that of arbitrary number fields, is often ultimately of most interest. With this in mind, the reader may take $k=\mathbb{Q}$ throughout the paper, for simplicity, and no essential understanding will be sacrificed. In Section~\ref{The Well-Tempered Complex},
we recall the construction of the well-tempered complex from
\cite{mcconnell_computing_2020}. In Section~\ref{The Borel-Serre
  Boundary}, we give our main geometric construction of boundary
neighborhoods that vary continuously under the change of
temperament. Section~\ref{A Family of Central Tiles} recalls results from
\cite{saper_tilings_1997} and proves the existence of a one-parameter
family of central tiles that meets our boundary neighborhoods at each
temperament. In Section~\ref{Extending to the Boundary}, we summarize
the implications of these geometric constructions, proving the
existence of a one-parameter family of deformation
retractions. Section~\ref{Cohomology at Infinity} outlines the
spectral sequences that generalize those of
\cite{avner_cohomology_1997}, and in Section~\ref{Hecke
  Correspondences} we prove our main theorem.  Section~\ref{Comments
  on Implementation} contains practical comments on implementation and
computation.

\section{The Well-Rounded Retract} \label{The Well-Rounded Retract}
Let $k$ be a number field of degree $d$ over $\mathbb{Q}$ with ring of integers $\mathcal{O}$. Let $S=k\otimes_\mathbb{Q}\mathbb{R}$ and let $G=\textup{GL}_n(S)$. Fix an integer $n\geq 1$. There is a natural right\footnote{Consistent with \cite{mcconnell_computing_2020}, we consider the space of row vectors $S^n$ as a \textit{right} $G$-module. Our arithmetic groups act on the left. This is consistent with many modern references on equivariant cohomology, for instance \cite{brown_cohomology_1982}, but reverses the convention of \cite{avner_cohomology_1997}.} action of $G$ on the space of row vectors in $S^n$. A \emph{homothety} is multiplication of $S^n$ by a positive real scalar; homotheties form a subgroup~$H$ of~$G$.  A \emph{$\mathbb{Z}$-lattice} in $k^n$ is a finitely-generated subgroup of $k^n$ containing a $\mathbb{Q}$-basis of $k^n$ that is a $\mathbb{Z}$-basis of the subgroup. Fix a $\mathbb{Z}$-lattice $L_0$. Denote by $\mathcal{O}_0\subseteq k$ the stabilizer of $L_0$ under the action of $G$ on $k^n\subset S^n$ and let $\Gamma_0$ be the arithmetic group whose elements are those $\gamma\in G$ satisfying $\gamma\cdot L_0=L_0$. Let $\Gamma$ be any arithmetic subgroup of finite index in $\Gamma_0$ (not necessarily torsion free).

\begin{defn}[\cite{avner_cohomology_1997}, Def.~2.3]
A marked lattice inside $S^n$ is a homothety class of functions $f:L_0\rightarrow S^n$ of the form $f(x)=xg$ for some $g\in G$, where $f$ is equivalent to $f\cdot h$ for any homothety $h\in H$.
\end{defn}

The space of marked lattices is identified with $Y'=G/H$. Each ordinary lattice $L$ arises as the image of our fixed lattice $L_0$ under some $f\in Y'$, so that the space $Y=\Gamma_0\backslash G/H$ is identified with the set of all integer lattices in $S^n$ fixed by the action of $\mathcal{O}_0$ and isomorphic as $\mathcal{O}_0$-modules to $L_0$. 

Let $K$ be a maximal compact subgroup of $G$, specifically the
subgroup of $G$ that preserves the inner product $\left<\cdot, \cdot
\right>$ on $S^n$ that is defined by summing over the archimedean
places of $k$ \cite[553]{avner_cohomology_1997}. Let $X=G/HK$
\cite[(2.3.1)]{avner_cohomology_1997}.  The quotient $Y/K$ is
$\Gamma_0\backslash X$.

A \textit{set of weights} for an arithmetic subgroup $\Gamma$ is a set function $\Phi: \Gamma\backslash\mathbb{P}(k^n)\rightarrow\mathbb{R}_{>0}$. Analogously, a \textit{set of weights} for a $\mathbb{Z}$-lattice $L$ is a $\Gamma$-invariant function $\Phi^L: L-\{0\}\rightarrow\mathbb{R}_{>0}$. Given $\Phi$, we can define $\Phi^{L_0}(x)=\Phi(k\cdot x)$ and $\Phi^L(xg)=\Phi(x)$, when $L=L_0 g$. If $f$ is a marked lattice, with $f(L_0)=L$, we have $\Phi^L(f(x))=\Phi(x)$.  To define the well-tempered complex, we will use a \textit{one-parameter family of weights} for $L_0$ with parameter $\tau$, which is a family, real-analytic in $\tau$, of $\Gamma$-invariant weights denoted
\begin{equation*}
    \Phi_\tau:(L_0-\{0\})\times I\rightarrow\mathbb{R}_{>0}.
\end{equation*}

Given a set of weights, we may define the notion of a well-rounded marked lattice. Indeed, with respect to weights $\Phi^L$, the \textit{arithmetic minimum} of a marked lattice $f$ for which $f(L_0)=L$ is the number
\begin{equation*}
    m(f)=\textup{min}\{\Phi^L(x)\left< x,x\right>\; | \; x\in L-\{0\}\}.
\end{equation*}
The \textit{minimal vectors} of $f$ are the set of $x\in L$ with weighted length $m(f)$.

\begin{defn}[\cite{avner_cohomology_1997}, Def.~2.5]
A marked lattice $f\in Y'$ is \emph{well rounded} if its set of minimal vectors forms an $S$-module basis for $S^n$.
\end{defn}

The set $\tilde{W}$ of well-rounded\footnote{As in \cite{mcconnell_computing_2020}, cell complexes are decorated with a tilde if and only if they live ``upstairs'' in the symmetric space.} elements in $X$ \cite[(2.5)]{avner_cohomology_1997} is the \textit{well-rounded retract} in $X$ with respect to the specified set of weights, and the quotient $\Gamma\backslash \tilde{W}$ is the well-rounded retract in $\Gamma\backslash X$. The well-rounded retraction is a map $r_t:Y'\times [0,1]\rightarrow Y'$ defined on a given marked lattice $f$ as follows. Fix a choice of weights $\Phi$. Suppose that $L=f(L_0)$ and that $f$ is chosen in its equivalence class modulo $H$ so that $m(f)=1$. For each $i=1,\cdots, n$, let $Y'_i=\{f\in Y'\: |\; \textup{rk}_S(S\cdot M(f))\geq i\}$. This forms a descending sequence ending in $Y'_n$, the set of well-rounded marked lattices. For each $i$, there is a deformation retraction $r^{(i)}_t:Y'_i\times [0,1]\rightarrow Y'_i$ with image $Y'_{i+1}$ that is equivariant with respect to the actions of $K$ and $\Gamma_0$. 

Let $f_1=f$ and $f_i=r^{(i-1)}_1(f_{i-1})$, inductively, and define a $\mathbb{Q}$-flag $\mathscr{M}=\{0\subseteq M^{(1)}\subseteq\cdots\subseteq M^{(n)}=k^n\}$ where $M^{(i)}$ is the $k$-span of $f^{-1}_i(M(f_i))$, recalling that $M(f_i)$ is the set of minimal vectors for $f_i$ with respect to our fixed set of weights. Then, by definition
\begin{equation*}
    r^{(i)}_t(f_i)=\left\{\begin{matrix}
f_i & \textup{if}\; f_i\in Y'_{i+1} \\
\varphi_{1+(\mu_i-1)t}\cdot f_i & \textup{if}\; f_i\in Y'_i-Y'_{i+1} \\
\end{matrix}\right.,
\end{equation*}
where $\mu_i$ is the unique smallest number in $(0,1)$ so that $m(\varphi_{\mu_i}\cdot f_i)=1$ and $\varphi_{\mu_i}$ is the $S$-linear map which is multiplication by $1$ in $f_i(M^{(i)})\otimes_\mathbb{Q}\mathbb{R}$ and by $\mu_i$ in its orthogonal complement. The well-rounded retraction $r_t$ applied to a given marked lattice $f$ is the composition
\begin{equation*}
    r_t(f)=r^{(i)}_{t(n-1)-(i-1)}\circ r^{(i-1)}_1\circ\cdots\circ r^{(1)}_1(f),
\end{equation*}
where $i$ is such that $t\in [(i-1)/(n-1), i/(n-1)]$. The flag $\mathscr{M}$ is called the flag of successive minima for the marked lattice $f$. Note that this depends on the choice of weights.

In \cite{avner_cohomology_1997}, only the trivial set of weights is considered. However, the essential construction of neighborhoods of the Borel-Serre boundary components that we now recall goes through unchanged if we fix a priori a (possibly nontrivial) set of weights. Later, we will examine the case of a one-parameter family of weights for $L_0$.

\section{The Borel-Serre Compactification} \label{The Borel-Serre Compactification}

In general, for $\mathbf{G}$ a reductive algebraic group defined over $\mathbb{Q}$, $G=\mathbf{G}(\mathbb{R})$ its group of real points, $X$ its symmetric space, and $\Gamma\subset\mathbf{G}(\mathbb{Q})$ an arithmetic subgroup, one can form the Borel-Serre compactification of the quotient $\Gamma\backslash X$. This smooth manifold-with-corners is obtained by adjoining to X pieces indexed by parabolics, then taking the quotient of this ``bigger'' space by $\Gamma$. We now specialize to $\mathbf{G}=\mathbf{GL}_n$ and review this construction.

The set $_\mathbb{Q}\mathbf{T}=\{\textup{diag}(a_1,\cdots, a_n)\}$ is a maximal $\mathbb{Q}$-split torus in $G$ with $\mathbb{Q}$-roots $\{a_ia^{-1}_j\; | \; 1\leq i\neq j\leq n\}$. Let $\Delta=\{a_ia^{-1}_{i+1}\;|\;i=1,\cdots, n-1\}$ be the fundamental system of simple roots for $_\mathbb{Q}\mathbf{T}$. For any subset $J\subseteq\Delta$, denote by $_\mathbb{Q}\mathbf{T}_J$ the subgroup of $_\mathbb{Q}\mathbf{T}$ consisting of elements $\{\textup{diag}(t_1,\cdots, t_n)\}$ for which the $i$th and $(i+1)$st diagonal entries are equal whenever $t_it^{-1}_{i+1}\in J$. Denote by $T_J$ the identity component of $_\mathbb{Q}\mathbf{T}_J(\mathbb{R})$. Let $\{e_1,\cdots, e_n\}$ be the standard $k$-basis for $k^n$. Construct a graph with vertices $\{1,\cdots, n\}$ having edges that join vertices $i$ and $i+1$ if and only if $a_ia^{-1}_{i+1}\in J$. Consider the flag $\mathscr{F}_J=\{0\subsetneq V_1\subsetneq \cdots \subsetneq V_\ell=k^n\}$ in which each $V_j$ is spanned by $\{e_1,\cdots, e_{v_1+\cdots +v_j}\}$, for $v_j$ the number of vertices of the $j$th component of the graph (ordered left to right). Let $\mathbf{P}_J$ be the subgroup of $\mathbf{G}$ for which $\mathbf{P}_J(\mathbb{Q})$ is the stabilizer of $\mathscr{F}_J$ in $\mathbf{G}(\mathbb{Q})$. Let $P_J=\mathbf{P}_J(\mathbb{R})$.

Given any $g\in \mathbf{G}(\mathbb{Q})$, the conjugate $P=gP_Jg^{-1}$ admits a Levi decomposition as a semidirect product $P=M_PT_PU_P$, where $T_P=gT_Jg^{-1}$, $M_PT_P$ is the Levi subgroup of $P$ stable under the Cartan involution of $G$, and $U_P\subset P$ is the unipotent radical of $P$. We can form the quotients $A_P=T_P/H$ and $A_J=T_J/H$. There is an explicit isomorphism $A_J\rightarrow (0,\infty)^{\ell-1}$, for $\ell=\#(\Delta-J)+1$, prescribed by the roots in $\Delta-J$, which determines a partial compactification $\overline{A}_J$ of $A_J$ corresponding to $(0,\infty]^{\ell-1}$ under the aforementioned isomorphism.\footnote{Here we use the convention in \cite{saper_tilings_1997} of adding $\infty$ rather than $0$ (as is done in \cite{avner_cohomology_1997}). This is a matter of taste, but will prove convenient when we apply Saper's work in Section~\ref{A Family of Central Tiles}.} 

Recall that there is a geodesic action of $A_J$ on $X$, which extends, via conjugation by $g\in\mathbf{G}(\mathbb{Q})$, to a geodesic action of $A_P$ on $X$. There is a corresponding partial compactification $\overline{A}_P$ of $A_P$ and one defines the corner associated to $P$ to be the fibre product $X\times^{A_P}\overline{A}_P$. There is an inclusion-reversing correspondence between parabolic subgroups and associated corners. Inside each corner is a boundary face $e(P)=X\times ^{A_P}\{\infty\}^{\ell-1}$ and the Borel-Serre bordification $\overline{X}$ of $X$ is defined as a set to be the disjoint union of $X$ and the $e(P)$, as $P$ ranges over all the proper $\mathbb{Q}$-parabolic subgroups of $G$. One can topologize $\overline{X}$ as a Hausdorff space admitting a proper $\Gamma$-action with compact quotient $\Gamma\backslash\overline{X}$, the Borel-Serre compactification of $\Gamma\backslash X$.

\section{Boundary Neighborhoods for a Fixed Set of Weights} \label{Boundary Neighborhoods for a Fixed Set of Weights}
In this section, we review results from \cite{avner_cohomology_1997} which we will generalize in later sections. Throughout, we fix a (possibly nontrivial) set of weights for $\Gamma\subset \Gamma_0$. Fix a $\mathbb{Q}$-flag $\mathscr{F}=\{0=V_0\subsetneq V_1\subsetneq \cdots \subsetneq V_\ell=k^n\}$ and let
\begin{equation*}
    \Tilde{W}_\mathscr{F}=\{f\in \tilde{W} \; \; : \;\; S\cdot (M(f(L_0))\cap f(V_j))=f(V_j)\}.
\end{equation*}
This is the set of all marked lattices in $\tilde{W}$ that are well-rounded in each linear subspace $f(V_j)\subseteq S^n$ \cite[562]{avner_cohomology_1997}. Given our choice of $\mathscr{F}$ and a choice of marked lattice, we have a corresponding orthogonal scaling group $\mathscr{U}_{\mathscr{F},f}$ \cite[557]{avner_cohomology_1997}. Decompose $S^n$ as
\begin{equation*}
   S^n=\bigoplus^k_{j=0}V'_j
\end{equation*}
where $V'_j$ is the orthogonal complement of $f(V_{j-1}\cap L_0)\otimes_\mathbb{Q}\mathbb{R}$ in $f(V_{j}\cap L_0)\otimes_\mathbb{Q}\mathbb{R}$. Then, $\mathscr{U}_{\mathscr{F},f}$
consists of those elements of $\textup{End}_{S}(S^n)$ that act by positive real homotheties on each $V'_j$ modulo the full group of homotheties $H\subset G$. We have the following result about this scaling group.
\begin{lemma} [\cite{avner_cohomology_1997}, Lemma~7.3] \label{ash84lemma73}
There exists a $t=(t_1,\cdots, t_{k-1})\in \mathscr{U}_{\mathscr{F},f}$, depending on $f$, such that for any $\rho\in\mathscr{U}_{\mathscr{F},f}$ satisfying $\rho\leq t$, the flag $\mathscr{M}'$ of successive minima for $\rho\cdot f$ contains $\mathscr{F}$.
\end{lemma}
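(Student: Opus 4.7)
The plan is to combine a discreteness/finiteness argument for candidate minimal vectors with an inductive application of the well-rounded retraction to the successive subspaces $V_j$. The element $t\in\mathscr{U}_{\mathscr{F},f}$ will be chosen so that $\rho\leq t$ pushes $\rho\cdot f$ far enough in the boundary direction associated to $\mathscr{F}$ that minimization picks out vectors inside $V_1$ first, then inside $V_2$, and so on, producing a flag of successive minima that refines $\mathscr{F}$.

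First I would exploit the discreteness of $f(L_0)$ in $S^n$: for any constant $C>0$, only finitely many $x\in L_0-\{0\}$ satisfy $\Phi^{L_0}(x)\langle f(x),f(x)\rangle\leq C$. Since every $\rho\in\mathscr{U}_{\mathscr{F},f}$ acts by positive homotheties on each orthogonal summand $V'_j$ independently, identifying the minimal vectors of $\rho\cdot f$ reduces to a finite comparison whose output varies continuously (and monotonically in each coordinate) in the scaling parameters $s_j$ of $\rho$.

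Next, since $f\in\tilde{W}_{\mathscr{F}}$, the minimal vectors of $f$ lying in $V_1\cap L_0$ form an $S$-basis of $f(V_1)\subseteq V'_1$. For any $x\in L_0-V_1$ with bounded weighted length under $f$, the vector $f(x)$ has a nonzero component in some $V'_j$ with $j>1$, and the squared length of $\rho\cdot f(x)$ is $\sum_j s_j^2\,\|f(x)_j\|^2$, where $f(x)_j$ denotes the $V'_j$-component. Choosing the ratios of the $s_j$ deep enough in the direction prescribed by the partial order (this is the role of the coordinate $t_1$) forces $\Phi^{L_0}(x)\langle\rho f(x),\rho f(x)\rangle$ strictly above the weighted length of each minimum of $f\vert_{V_1\cap L_0}$. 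Hence every minimal vector of $\rho\cdot f$ lies in $V_1\cap L_0$, and by the well-roundedness of $f$ on $f(V_1)$ these still span $f(V_1)$. Therefore the first level of the flag of successive minima $\mathscr{M}'$ for $\rho\cdot f$ equals $V_1$.

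Finally, I would iterate: after applying $r^{(1)}_1$ the problem reduces to the orthogonal complement of $f(V_1)$ in $S^n$, where the induced flag $0\subsetneq V_2/V_1\subsetneq\cdots\subsetneq V_\ell/V_1$ and the quotient of $f$ again lie in the corresponding $\tilde{W}_{\mathscr{F}/V_1}$. Applying the argument at each level yields thresholds producing the coordinates $t_1,\ldots,t_{k-1}$, and I set $t$ to be their componentwise refinement. The main technical obstacle is matching the partial order on $\mathscr{U}_{\mathscr{F},f}$ to this inductive reduction: one must verify that the product decomposition of $\mathscr{U}_{\mathscr{F},f}$ (modulo $H$) into one-parameter scaling subgroups on successive quotients is compatible with the order, so that the single inequality $\rho\leq t$ automatically enforces the required smallness at every stage of the induction. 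This is where the precise form of $\mathscr{U}_{\mathscr{F},f}$ from \cite[p.~557]{avner_cohomology_1997} is essential, since the action on each $V'_j$ is independent and the partial order is componentwise in exactly the parameters needed.
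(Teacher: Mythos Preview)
Your overall strategy is the right one, but there is a genuine gap: you assume $f\in\tilde{W}_{\mathscr{F}}$ when you write ``since $f\in\tilde{W}_{\mathscr{F}}$, the minimal vectors of $f$ lying in $V_1\cap L_0$ form an $S$-basis of $f(V_1)$.'' The lemma is stated for an arbitrary marked lattice $f$, and in its application (the construction of $N_\mathscr{F}$ immediately following the lemma) it is used for $f_1$ ranging over the canonical cross-section $C'_1$, not over $\tilde{W}_\mathscr{F}$. For general $f$, even after you force the minimal vectors of $\rho\cdot f$ to lie in $V_1\cap L_0$, they need not span $V_1$; hence $M^{(1)}$ may be a proper subspace of $V_1$, and your induction ``pass to $V_2/V_1$'' does not get off the ground. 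The same unwarranted hypothesis reappears in your iteration when you claim the quotient of $f$ lies in $\tilde{W}_{\mathscr{F}/V_1}$.

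The paper's proof (recapitulated here verbatim inside the proof of the continuity lemma in Section~\ref{The Borel-Serre Boundary}) fixes exactly this point by building the missing retraction into the definition of $t$. For each $j$ one performs the well-rounded retraction on $L^\circ=f(V_j\cap L_0)$ inside $V^\circ=f(V_j)\otimes_{\mathbb{Q}}\mathbb{R}$, obtaining constants $\mu_1,\dots,\mu_{d_j-1}\le 1$ with product $\beta_j$; one also lets $\alpha_j$ be the shortest nonzero vector in the orthogonal projection of $f(L_0)$ onto $f(V_j)^\perp$. Then $t_j$ is defined inductively by $t_1=\min(1,\tfrac{1}{2}\alpha_1\beta_1)$ and $t_j=\min\bigl(1,\tfrac{1}{2}(t_1\cdots t_{j-1})^{-1}\alpha_j\beta_j\bigr)$. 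The factor $\beta_j$ is precisely what guarantees that the successive-minima procedure, run on $\rho\cdot f$ with $\rho\le t$, stays inside $V_j$ for as many steps as it takes to span $V_j$, after which $V_j$ appears in $\mathscr{M}'$. Your argument is the special case $\beta_j=1$ for all $j$; inserting the $\beta_j$'s and replacing your finiteness argument for vectors outside $V_j$ by the explicit quantity $\alpha_j$ would repair it and bring it in line with the paper.
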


Here, the flag of successive minima $\mathscr{M}'$ depends on the scaled marked lattice $\rho\cdot f$ and on the well-rounded retraction (and hence on the choice of weights used to construct it). Given this lemma, we can construct neighborhoods $N_\mathscr{F}$ near the Borel-Serre boundary face $e(P)$, where $\mathscr{F}$ is the $\mathbb{Q}$-flag associated to the parabolic $P$. Consider the canonical cross-section $C_1=U_pM_p\cdot 1\cdot HK$, as defined in \cite{saper_tilings_1997}. Let $C'_1$ denote the set of all marked lattices representing points in $C_1$. For a given $f_1\in C'_1$, choose $t_1\in \mathscr{U}_{\mathscr{F},f}$ as in Lemma~\ref{ash84lemma73} and define
\begin{equation*}
    N_\mathscr{F}=\bigcup_{f_1\in C'_1}\{\rho_1\cdot f_1 \: \: : \;\; \rho_1<t_1, \; \rho_1\in \mathscr{U}_{\mathscr{F},f_1}\}.
\end{equation*}
Recall that as a set, $e(P)$ corresponds to $X\times^{A_P}\{\infty\}^{\ell-1}$ in the corner $X(P)=X\times^{A_P}\overline{A}_P$, which fibres over $X$, and that the partial compactification $\overline{A}_P$ is isomorphic to $(0,\infty]^{\ell-1}$.
\begin{prop} [\cite{avner_cohomology_1997}, Prop.~7.4] \label{am97prop74}
On the intersection $N_\mathscr{F}$ with a fibre of $X(P)$, the well-rounded retraction is constant and satisfies $r(N_\mathscr{F})=\Tilde{W}_\mathscr{F}$.
\end{prop}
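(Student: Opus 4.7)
The plan is to decode what ``fibre of $X(P)$'' means in the parametrization of $N_\mathscr{F}$, and then to use Lemma~\ref{ash84lemma73} together with a stage-by-stage analysis of the well-rounded retraction to verify both assertions. First I would observe that points of $N_\mathscr{F}$ are parametrized by pairs $(f_1, \rho_1)$ with $f_1 \in C'_1$ on the canonical cross-section and $\rho_1 < t_1$ in the orthogonal scaling group $\mathscr{U}_{\mathscr{F}, f_1}$. Since the geodesic action of $A_P$ corresponds at the level of marked lattices to the action of $\mathscr{U}_{\mathscr{F}, f_1}$, the fibre of $X(P) \to e(P)$ meets $N_\mathscr{F}$ precisely in the slice obtained by fixing $f_1$ and letting $\rho_1$ vary over the box $\rho_1 < t_1$. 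Constancy on fibres therefore reduces to showing that $r_1(\rho_1 \cdot f_1)$ is independent of $\rho_1$ for all admissible $\rho_1$.

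For that constancy, I would apply Lemma~\ref{ash84lemma73}: since $\rho_1 < t_1$, the flag $\mathscr{M}'$ of successive minima for $\rho_1 \cdot f_1$ contains $\mathscr{F}$. I would then run the retraction stage by stage, observing that at each stage $i$ the current subspace $M^{(i)}$ is a member of $\mathscr{M}'$ and hence sits compatibly with $\mathscr{F}$, so each $\varphi_{\mu_i}$ acts orthogonally on a union of the slabs $V'_j$ cut out by $\mathscr{F}$. Because $\rho_1 \in \mathscr{U}_{\mathscr{F}, f_1}$ acts by positive real homotheties on each $V'_j$, two choices $\rho_1$ and $\rho'_1$ give marked lattices differing only by an element of $\mathscr{U}_{\mathscr{F}, f_1}$, and the successive normalizations $m(\varphi_{\mu_i} \cdot f_i) = 1$ imposed by the retraction uniquely rescale each $V'_j$ so that the shortest vector in the current subspace has weighted length $1$. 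Hence the retraction collapses the entire $\mathscr{U}_{\mathscr{F}, f_1}$-orbit through $f_1$ (intersected with $N_\mathscr{F}$) to a single well-rounded marked lattice depending only on $f_1$, proving constancy on fibres.

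For the image identity, the forward containment $r(N_\mathscr{F}) \subseteq \tilde{W}_\mathscr{F}$ follows because the final flag of successive minima still contains $\mathscr{F}$, so the minimal vectors of the resulting well-rounded marked lattice span each $f(V_j)$, which is exactly the defining condition of $\tilde{W}_\mathscr{F}$. For the reverse containment, any $g \in \tilde{W}_\mathscr{F}$ is already well-rounded and fixed by $r$; I would show that $g$ lies on an $\mathscr{U}_{\mathscr{F}, f_1}$-orbit through some $f_1 \in C'_1$ with the corresponding $\rho_1 < t_1$, using that $C_1$ is a cross-section for the $A_P$-action on the interior of the corner and that $g$ itself witnesses the membership with $\rho_1 = 1$ (after possibly shrinking to stay inside the open box). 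Consequently $g \in N_\mathscr{F}$ and $g = r(g) \in r(N_\mathscr{F})$.

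The hard part will be the cancellation argument in the second paragraph: making precise that the normalization factors $\mu_i$ produced at each stage depend on $\rho_1$ in exactly the way that cancels the initial scaling. Verifying this rigorously requires matching the stages of the retraction to the jumps in $\mathscr{F}$ and checking that the product decomposition of $\mathscr{U}_{\mathscr{F}, f_1}$ into scalings of the $V'_j$ is compatible with the sequence of orthogonal complements of the $M^{(i)}$; once this compatibility is established, uniqueness of each $\mu_i$ forces the retraction outputs for $\rho_1 \cdot f_1$ and $\rho'_1 \cdot f_1$ to coincide.
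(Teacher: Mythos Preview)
First, note that the present paper does not prove this proposition; it is quoted without proof from \cite{avner_cohomology_1997}. So there is no in-paper argument to compare against directly, only the original source.

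Your strategy for the constancy claim---invoke Lemma~\ref{ash84lemma73} to place $\mathscr{F}$ inside the flag of successive minima $\mathscr{M}'$, then trace the retraction stage by stage and observe that the scalings $\varphi_{\mu_i}$ and the elements of $\mathscr{U}_{\mathscr{F},f_1}$ interact compatibly---is precisely the line taken in \cite{avner_cohomology_1997}. Your identification of the fibre of $X(P)$ with the $\mathscr{U}_{\mathscr{F},f_1}$-slice through $f_1$ is correct, and the forward inclusion $r(N_\mathscr{F})\subseteq\tilde{W}_\mathscr{F}$ follows as you say.

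The reverse inclusion, however, has a gap. You want $g\in\tilde{W}_\mathscr{F}$ to lie in $N_\mathscr{F}$ and write that ``$g$ itself witnesses the membership with $\rho_1=1$.'' But $C_1$ is the cross-section at $1\in A_P$, so if $g=\rho_g\cdot f_1$ with $f_1\in C'_1$, the element $\rho_g$ is determined by $g$ and there is no reason it satisfies $\rho_g<t_1(f_1)$; indeed the explicit construction (reproduced in the proof of Lemma~5.2 here) gives each $t_j\le 1$, while nothing bounds the components of $\rho_g$ below~$1$. Your parenthetical ``after possibly shrinking'' is the right instinct, but once you replace $g$ by some $\rho'\cdot f_1\in N_\mathscr{F}$ you must still show $r(\rho'\cdot f_1)=g$, and constancy alone does not identify the constant value. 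The way \cite{avner_cohomology_1997} closes this is to sharpen the constancy computation into an explicit description: when $\mathscr{F}\subseteq\mathscr{M}'$, the retraction of $\rho\cdot f_1$ amounts to performing the well-rounded retraction independently on each successive quotient $f_1(V_j\cap L_0)/f_1(V_{j-1}\cap L_0)$ (equivalently, on each orthogonal slab $V'_j$) and reassembling. That description is manifestly independent of $\rho$, and applied to the fibre through a given $g\in\tilde{W}_\mathscr{F}$ it recovers $g$ because $g$ is already well-rounded in each slab. This slabwise factorization of the retraction is the missing ingredient in your sketch; it is also what makes your ``hard part'' cancellation precise rather than heuristic.
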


We now extend everything to the Borel-Serre bordification. For marked lattices $f_1\in C'_1$ representing points in the canonical cross-section, there are isomorphisms $\mathscr{U}_{\mathscr{F},f_1}\cong A_P$, which extend to the partial compactifications to give isomorphisms $\overline{\mathscr{U}}_{\mathscr{F},f_1}\cong \overline{A}_P$, where in $\overline{\mathscr{U}}_{\mathscr{F},f_1}$ one allows elements with some zeros on the diagonal \cite[566]{avner_cohomology_1997}. We can then define the following tubular neighborhoods (with boundary) of the boundary faces $e(P)$ in $\overline{X}$
\begin{equation*}
    \overline{N}_\mathscr{F}=\bigcup_{f_1\in C_1}\{\rho_1\cdot f_1 \; \; : \;\; \rho_1<t_1, \; \rho_1\in \overline{\mathscr{U}}_{\mathscr{F},f_1}\},
\end{equation*}
as in \cite[Def.~7.5]{avner_cohomology_1997}. Each $\overline{N}_\mathscr{F}$ is homeomorphic to $e(P)\times (1,\infty]^{\ell-1}$. Recall that $e(P)$ is the quotient of the corner $X(P)$ by $A_P$ under its geodesic action on $X$ and that there is a quotient map $q_P: X(P)\rightarrow e(P)$ of fibre bundles \cite[557]{avner_cohomology_1997}, for which the canonical cross-sections are sections taking the form $\{\cdot\}\times e(P)$ \cite[175]{saper_tilings_1997}. Indeed $X(P)$ is a fibre bundle with $\overline{A}_P\cong (0,\infty]^{\ell-1}$ fibres, and $\overline{N}_\mathscr{F}\cong e(P)\times (1,\infty]^{\ell-1}$ is a subset of $X(P)$. Thus, $\overline{N}_\mathscr{F}$ also has $\overline{A}_P$ fibres, which is to say fibres of the form $\{x\}\times (1,\infty]^{\ell-1}$, where $x\in e(P)$.

The main construction of \cite{avner_cohomology_1997} shows that the well-rounded retraction $r$ extends continuously to a map $\overline{r}:\overline{X}\rightarrow \tilde{W}$, which on the interior of $X$ is just $r$, itself, and which at a point $x\in \overline{X}-X$ is given by the constant value $r(F_x\cap X)$, where $F_x$ is the fibre at $x$ of the tubular neighborhood with boundary $\overline{N}_\mathscr{F}$. This result is summarized in the following proposition, which it will be our goal in later sections to promote to a statement that accounts for a change of weights.

\begin{prop} [\cite{avner_cohomology_1997}, Prop.~7.5] \label{am97prop75}
The map $\overline{r}$ is a continuous extension of $r$ and is a $\Gamma_0$-equivariant retraction. It is constant on the $\overline{A}_P$ fibres of $\overline{N}_\mathscr{F}$.
\end{prop}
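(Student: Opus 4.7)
The plan is to use Proposition~\ref{am97prop74} in two ways: first to force the definition of $\overline{r}$ at boundary points to be constant along $\overline{A}_P$-fibres, and second to reduce continuity at the boundary to the already-known continuity of $r$ on the interior. Explicitly, for $x\in X$ set $\overline{r}(x)=r(x)$, and for $x\in e(P)\subset \partial\overline{X}$ let $F_x$ denote the $\overline{A}_P$-fibre through $x$ in $\overline{N}_\mathscr{F}$, where $\mathscr{F}$ is the flag associated to $P$; then set $\overline{r}(x)=r(y)$ for any $y\in F_x\cap X$. Proposition~\ref{am97prop74} guarantees that this value is independent of the choice of $y$, so $\overline{r}$ is well defined as a map into $\tilde{W}$. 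A compatibility check is needed when $x$ lies in the closure of several boundary faces: for a point $x\in e(P)$ that also meets $\overline{N}_{\mathscr{F}'}$ for a coarser flag $\mathscr{F}'$ refined by $\mathscr{F}$, the inclusion $\overline{\mathscr{U}}_{\mathscr{F}',f}\subset\overline{\mathscr{U}}_{\mathscr{F},f}$ shows that the two candidate fibres through $x$ produce the same value of $r$.

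The core step is continuity at boundary points. Suppose $x\in e(P)$ and $x_n\to x$ in $\overline{X}$. For large $n$ the $x_n$ lie in $\overline{N}_\mathscr{F}$, and the corner fibre bundle $q_P:X(P)\to e(P)$ is continuous, so $q_P(x_n)\to x$ in $e(P)$. Composing with the canonical cross-section $s:e(P)\to C_1\subset X$ yields interior points $s(q_P(x_n))\to s(x)$. By the fibre-constancy from Proposition~\ref{am97prop74}, $\overline{r}(x_n)=r(s(q_P(x_n)))$ whenever $x_n\in X$, and by definition $\overline{r}(x)=r(s(x))$; continuity of $r$ on $X$ then gives $\overline{r}(x_n)\to\overline{r}(x)$. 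The same computation handles the case in which some $x_n$ lie on $\partial\overline{X}$, since each such $x_n$ has the same image under $r\circ s\circ q_P$ as any nearby interior point in its fibre.

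The remaining claims are short. Restriction to $X$ yields $\overline{r}|_X=r$, and since $r$ maps into $\tilde{W}$ and fixes $\tilde{W}$ pointwise, $\overline{r}$ is a retraction of $\overline{X}$ onto $\tilde{W}$. For $\Gamma_0$-equivariance, any $\gamma\in\Gamma_0$ carries $\overline{N}_\mathscr{F}$ to $\overline{N}_{\gamma\mathscr{F}}$ preserving the $\overline{A}_P$/$\overline{A}_{\gamma P\gamma^{-1}}$ fibre structure, and since $r$ is already $\Gamma_0$-equivariant on $X$, the definition of $\overline{r}$ via fibres transfers this equivariance to $\partial\overline{X}$. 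Constancy on $\overline{A}_P$-fibres of $\overline{N}_\mathscr{F}$ is then immediate from the definition.

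I expect the main obstacle to be organizing the continuity argument across the stratification of $\partial\overline{X}$, specifically at points lying simultaneously in $\overline{N}_\mathscr{F}$ and $\overline{N}_{\mathscr{F}'}$ for distinct flags. The key tool is the compatibility between the partial compactifications $\overline{\mathscr{U}}_{\mathscr{F},f}$ for nested flags, together with the fact that the canonical cross-sections for $P$ and $Q$ match up on the common portion of their corners; this ensures that the fibre-defined extension is globally single-valued and continuous, rather than merely continuous chart-by-chart.
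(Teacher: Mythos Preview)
The paper does not supply its own proof of this proposition; it is quoted as a background result from \cite{avner_cohomology_1997}, with only the definition of $\overline{r}$ sketched in the paragraph immediately preceding the statement. Your proposal reproduces that sketch faithfully and fills in the expected details, so the approach is the same one the paper points to.

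One technical point deserves tightening. In the continuity step you write $\overline{r}(x_n)=r(s(q_P(x_n)))$ with $s:e(P)\to C_1$ the canonical cross-section. But Proposition~\ref{am97prop74} only asserts that $r$ is constant on the portion of each $\overline{A}_P$-fibre lying \emph{inside} $N_\mathscr{F}$, and the canonical cross-section $C_1=U_PM_P\cdot 1\cdot HK$ itself need not lie in $N_\mathscr{F}$ (the defining condition is $\rho_1<t_1$, and nothing forces $t_1\ge 1$). So you cannot conclude $r(x_n)=r(s(q_P(x_n)))$ directly. The repair is easy: since the paper records that $\overline{N}_\mathscr{F}\cong e(P)\times(1,\infty]^{\ell-1}$, you may replace $s$ by any continuous local section of $q_P$ whose image lies in $N_\mathscr{F}$ near $x$, for instance one taken at a fixed fibre parameter close to the boundary. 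With that adjustment the rest of your argument, including the compatibility discussion for nested flags and the $\Gamma_0$-equivariance, is correct.
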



\section{The Well-Tempered Complex} \label{The Well-Tempered Complex}
Consider the sub-semigroup $\Delta=\{a\in G\; | \; L_0a\subseteq L_0\}$ of $G$. Choose $a\in \Delta$ and let $\Gamma_a=\Gamma_0\cap a^{-1}\Gamma_0 a$.  Our goal is to compute the Hecke operator $T_a$ on equivariant cohomology.  Fix a set of weights $\Phi$ for $\Gamma_0$ (we will suppress the superscript $L_0$ in our notation, understanding $L_0$ to be fixed once and for all) satisfying $\Phi(xa)=\Phi(x)$ for all $x\in L_0\setminus\{0\}$. The well-tempered complex will depend upon this choice of the data $\Phi$, $a$, and $\Gamma_a$. The first step is to construct a one-parameter family of weights $\Phi_\tau: (L_0-\{0\})\times I\rightarrow \mathbb{R}_{>0}$ as follows. Letting $M_0=L_0a$, set
\begin{equation*}
    \Phi_\tau(x)=\left\{\begin{array}{cl}
\Phi(x) & \textup{if}\; x\in M_0-\{0\} \\
\tau^2\cdot\Phi(x) & \textup{if}\; x\notin M_0.
\end{array}\right.
\end{equation*}

Fix the interval $I=[1,\tau_0]$ for $\tau_0>1$ (when working henceforth with the well-tempered complex, we will always take $\tau_0$ to be chosen as in \cite[Thm.~4]{mcconnell_computing_2020}). By \cite[Thm.~2]{mcconnell_computing_2020}, there is a retraction 
\begin{equation*}
    R_\tau(t): (Y\times [1,\tau_0])/K\times [0,1]\longrightarrow (Y\times [1,\tau_0])/K,
\end{equation*}
continuous in both $t$ and $\tau$, which at every temperament $\tau$ is a strong deformation retraction of $(Y\times \{\tau\})/K$ onto $W_{\tau}$, the well-rounded retract defined by the set of weights specified above for $\tau$ fixed. In particular, $W_{\tau}$ is the image of $R_\tau(1)$ for every $\tau\in [1,\tau_0]$.

\begin{defn}[\cite{mcconnell_computing_2020}, Def.~4]
The well-tempered complex $W^+$ for the data $\Phi$, $a$, and $\Gamma_a$ is the image in $(Y\times [1,\tau_0])/K$ of the map $R_\tau(t)$.
\end{defn}

This is a finite cell complex \cite[Thm.~3]{mcconnell_computing_2020}. In what follows, we would like to work initially inside the symmetric space $X$, rather than ``downstairs'' modulo $\Gamma_0$. We can lift the map $R_\tau(t)$ from $Y/K$ up to $X$ via the quotient $X\rightarrow \Gamma_0\backslash X=Y/K$, yielding a $\Gamma_0$-equivariant map
\begin{equation*}
    \Tilde{R}_\tau(t): (X\times [1,\tau_0])\times [0,1]\rightarrow X\times [1,\tau_0].
\end{equation*}
We have fibrewise identifications
\begin{equation*}
    (Y\times \{\tau\})/K\cong (Y/K)\times \{\tau\}=(\Gamma_0\backslash X)\times \{\tau\},
\end{equation*}
coming from the bundle isomorphism $(Y\times [1,\tau_0])/K\cong (Y/K)\times [1,\tau_0]$ (see \cite[8]{mcconnell_computing_2020}). By \cite[Thm.~2]{mcconnell_computing_2020}, $\Tilde{R}_\tau(t)$ is a $\Gamma_0$-equivariant map, continuous in both $\tau$ and $t$. By \cite{avner_cohomology_1997}, we have the well-rounded retractions $r_\tau: X\rightarrow \Tilde{W}_\tau$ on each $\tau$-fibre, which extend as retractions $\overline{r}_\tau: \overline{X}\rightarrow \Tilde{W}_\tau$, where $\Tilde{W}_\tau$ is the inverse image of $W_\tau$ under the quotient map modulo $\Gamma_0$. 

Thus, we have a priori an extension of the map
\begin{equation*}
    \tilde{R}_\tau(1): X\times [1,\tau_0]\rightarrow \tilde{W}^+=\textup{im}(\tilde{R}_\tau(t))
\end{equation*}
to a retraction $\overline{R}_\tau:\overline{X}\times [1,\tau_0]\rightarrow \tilde{W}^+$. Here, as in \cite[(4.3)]{mcconnell_computing_2020}, $\tilde{W}^+$ is the universal ramified cover of $W^+$, which is a regular cell complex admitting a $\Gamma$-action with finite stabilizers of cells \cite[Thm.~3]{mcconnell_computing_2020}. The goal of the following sections will be to show that this map $\overline{R}_\tau$ comes from a deformation retraction of $\overline{X}\times [1,\tau_0]$ onto $\tilde{W}^+$.

\begin{remark}
\normalfont
The complex $\Tilde{W}^+$ can be written set-theoretically as the union
\begin{equation*}
    \bigcup_{\tau\in [1,\tau_0]}(\tilde{W}_\tau\times \{\tau\})\subseteq\mathcal{Q}\times I,
\end{equation*}
which is naturally embedded inside the cone $\mathcal{Q}$ of positive definite symmetric quadratic forms cross the interval $I=[1,\tau_0]$.
\end{remark}

\section{The Borel-Serre Boundary} \label{The Borel-Serre Boundary}

Specifying a set of weights changes the flag of successive minima associated to a given marked lattice, since it changes the definition of ``well-rounded." Likewise, the sets $\Tilde{W}_\mathscr{F}$ change under a continuous one-parameter family of weights, as do their quotients $W_\mathscr{F}$ downstairs modulo $\Gamma_0$. Each $W_\mathscr{F}$ is a closed subcomplex of $W$, and one expects the cell complex structure to change at critical temperaments, as one passes through the well-tempered complex. However, the orthogonal scaling group associated to a marked lattice does not change if we change the weights. Although the new weighted inner products change which vectors are deemed minimal, the notion of orthogonality that gives the decomposition of $S^n$ into the spaces $V'_j$ on which $\mathscr{U}_{\mathscr{F},f}$ acts is unchanged (the inner product defined on $S^n$ using the archimedean values does not depend on the choice of weights). In particular, we still have an isomorphism of groups $A_P\cong \mathscr{U}_{\mathscr{F},f}$ \cite[558]{avner_cohomology_1997}.

%

We now explain how to construct a family of boundary neighborhoods, in the spirit of those constructed in Section~\ref{Boundary Neighborhoods for a Fixed Set of Weights}, which vary continuously with the parameter $\tau$. This will account for the changing weights in the definition of the well-tempered complex $\tilde{W}^+$. In a fibre over a fixed $\tau$ of the trivial fibration $\overline{X}\times [1,\tau_0]\rightarrow [1,\tau_0]$, consider the canonical cross-section $C_1=U_pM_p\cdot 1\cdot HK$ as in Section~\ref{Boundary Neighborhoods for a Fixed Set of Weights}. Again, let $C'_1$ denote the set of all marked lattices representing points in $C_1$. For a given $f_1\in C'_1$, choose $t_1(\tau)\in \mathscr{U}_{\mathscr{F},f}$ as in Lemma~\ref{ash84lemma73}. Note that this choice of $t_1$ depends upon the weights and therefore implicitly on $\tau$. This suggests the following definition.

\begin{defn}
At a temperament $\tau\in [1,\tau_0]$, define $\overline{N}_\mathscr{F}(\tau)\subset \overline{X}\times \{\tau\}$ to be the tubular neighborhood with boundary
\begin{equation*}
    \overline{N}_\mathscr{F}(\tau)=\bigcup_{f_1\in C_1}\{\rho_1\cdot f_1 \; \; : \;\; \rho_1<t_1(\tau), \; \rho_1\in \overline{\mathscr{U}}_{\mathscr{F},f_1}\}.
\end{equation*}
\end{defn}

Denote by $N_\mathscr{F}(\tau)$ the corresponding neighborhood inside $X$, where $\overline{\mathscr{U}}_{\mathscr{F},f_1}$ is replaced with $\mathscr{U}_{\mathscr{F},f_1}$. The content of the following lemma is that the neighborhoods $\overline{N}_\mathscr{F}(\tau)$ vary continuously in $\tau$.

\begin{lemma}
The elements $t(\tau)=(t_1(\tau),\cdots, t_{k-1}(\tau))\in \mathscr{U}_{\mathscr{F},f}$ vary continuously in $\tau\in [1,\tau_0]$ with respect to the standard topology on $\mathbb{R}^{\ell-1}$.
\end{lemma}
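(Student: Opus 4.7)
The plan is to pin down an explicit formula for $t(\tau)$ in terms of $\Phi_\tau$-weighted lengths of finitely many lattice vectors, from which continuity is immediate. First, note that the ambient group $\mathscr{U}_{\mathscr{F},f}$ does not itself depend on $\tau$: its defining orthogonal decomposition $S^n=\bigoplus V'_j$ uses only the archimedean inner product $\langle\cdot,\cdot\rangle$ and the flag $\mathscr{F}$, as emphasized in the opening paragraph of the present section. Fixing once and for all the identification $\mathscr{U}_{\mathscr{F},f}\cong(0,\infty)^{k-1}$ by recording the scaling factor on each $V'_j$ modulo a global homothety, the assertion becomes a genuine question about continuity of a curve $\tau\mapsto t(\tau)$ in the fixed space $(0,\infty)^{k-1}$.

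Second, I would revisit the proof of Lemma~\ref{ash84lemma73} in order to extract an explicit description of a valid $t(\tau)$. Unwinding the definitions, the condition that the flag $\mathscr{M}'$ of successive minima of $\rho\cdot f$ contains $\mathscr{F}$ is equivalent to a finite system of strict inequalities on $\Phi_\tau$-weighted lengths: for each $j$, the minimum of $\Phi_\tau(v)\langle\rho f(v),\rho f(v)\rangle$ over $v\in(V_j\cap L_0)-\{0\}$ must be strictly smaller than the same quantity for every $v\in L_0$ with $k\langle v\rangle\not\subseteq V_j$. These inequalities cut out an open region in $(0,\infty)^{k-1}$ containing an accumulation corner of the scaling group, and one may take $t(\tau)$ to be the Pareto-maximal corner of this region. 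Each $t_j(\tau)$ is then expressible as a minimum of ratios of $\Phi_\tau$-weighted lengths, taken over a finite set of competing lattice vectors (the set being finite by discreteness of $L_0$ together with the uniform bound $\Phi_\tau\leq\tau_0^2\cdot\Phi$ on the compact interval $[1,\tau_0]$).

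The continuity of $t(\tau)$ then follows from the explicit shape of the one-parameter family of weights: $\Phi_\tau(x)=\Phi(x)$ for $x\in M_0$ and $\Phi_\tau(x)=\tau^2\Phi(x)$ for $x\notin M_0$, so each individual weighted length is real-analytic in $\tau$, and a minimum of finitely many continuous functions is continuous (indeed piecewise real-analytic).

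The main obstacle I anticipate is the \emph{selection problem}: Lemma~\ref{ash84lemma73} asserts only the existence of $t(\tau)$, and a priori different existential witnesses could jump discontinuously as $\tau$ varies. The Pareto-maximal choice yields a canonical selection, but one must check that the real-analytic branches defining it fit together continuously at those $\tau$ where two branches exchange dominance. This is the standard piecewise-analytic gluing and is automatic once $t_j(\tau)$ is written as a finite minimum of continuous functions, but making this verification precise, together with a clean argument that only finitely many lattice vectors are ever "relevant" uniformly in $\tau\in[1,\tau_0]$, is where the bulk of the technical care will be concentrated.
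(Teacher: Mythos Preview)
There is a genuine gap at your second step. The claimed equivalence---that $\mathscr{F}\subseteq\mathscr{M}'$ holds if and only if, for each~$j$, the $\Phi_\tau$-weighted minimum over $(V_j\cap L_0)\setminus\{0\}$ lies strictly below the weighted length of every $v$ with $k\langle v\rangle\not\subseteq V_j$---is false. That system of inequalities only forces the \emph{first} member $M^{(1)}$ of the successive-minima flag to sit inside $V_1$; when $\dim_k V_1\geq 2$ it says nothing about whether the second and later successive minima remain in $V_1$ before spilling into $V_2$, because those are governed by the intermediate retraction scalings~$\mu_i$. Concretely, take $n=3$ with $V_1$ two-dimensional and choose $f$ so that one basis vector of $V_1$ is extremely long while a vector just outside $V_1$ is only moderately long: your inequality for $j=1$ then holds (the global minimum is the short basis vector of $V_1$), yet after the first retraction step the next vector to become minimal is the one outside $V_1$, so $V_1\notin\mathscr{M}'$. (Note also that your conditions for $j\geq 2$ are automatically implied by the $j=1$ condition, since $V_1\subseteq V_j$, so the system collapses to a single constraint.) Consequently the ``Pareto-maximal corner'' of the region you describe need not lie in the admissible region at all, and the assertion that each $t_j(\tau)$ is a finite minimum of ratios of weighted lengths of a \emph{fixed} set of lattice vectors is unsupported.

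The paper proceeds differently: it adopts the explicit recursive choice of $t_j$ already built into the proof of Lemma~\ref{ash84lemma73} in \cite{avner_cohomology_1997}, namely
\[
t_j(\tau)=\min\bigl(1,\tfrac{1}{2}\,(t_1(\tau)\cdots t_{j-1}(\tau))^{-1}\alpha_j(\tau)\beta_j(\tau)\bigr),
\]
where $\alpha_j(\tau)$ is the $\Phi_\tau$-weighted length of the shortest nonzero vector in the orthogonal projection of $f(L_0)$ onto $f(V_j)^\perp$, and $\beta_j(\tau)=\mu_1(\tau)\cdots\mu_{d_j-1}(\tau)$ is the product of the retraction scalings obtained by well-rounding $f(V_j\cap L_0)$ inside $f(V_j\otimes_{\mathbb{Q}}\mathbb{R})$. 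Continuity of $\alpha_j(\tau)$ is immediate from continuity of $\Phi_\tau$. The substantive point---and the ingredient your argument does not invoke---is the continuity of the $\mu_i(\tau)$, which is exactly the content of \cite[Thm.~2]{mcconnell_computing_2020} (the continuity of the well-rounded retraction $R_\tau(t)$ in~$\tau$). Once that is in hand, each $t_j(\tau)$ is a composite of continuous functions and the lemma follows.
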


\begin{proof}
Let $j\in \{1,\cdots, \ell-1\}$ and set $d_j=\textup{dim}_\mathbb{Q}(V_j)$. Let $L^\circ=f(V_j\cap L_0)$, which is a $\mathbb{Z}$-lattice in the $S$-submodule $V^\circ=f(V_j\otimes_\mathbb{Q}\mathbb{R})\subseteq S^n$. These dimensions and subspaces remain unchanged as $\tau$ varies, for a fixed marked lattice $f$, a fixed $\mathbb{Z}$-lattice $L_0$, and a fixed flag $\mathscr{F}$. Perform the well-rounded retraction on $L^\circ$ within $V^\circ$. This means multiplying by constants $\mu_1(\tau),\cdots, \mu_{d_j-1}(\tau)\leq 1$ on the corresponding subspaces of $V^\circ$, after first rescaling $f$ by a homothety. Rescaling does not affect the value of the $\mu_i$, and as in paragraph 2 of the proof of \cite[Thm.~2]{mcconnell_computing_2020}, the action of positive real homotheties is a continuous function of $\tau$, because the family of weights $\varphi_\tau$ is continuous in $\tau$ and the weights are positive. 
 
Define $\beta_j(\tau)=\mu_1(\tau)\cdots\mu_{d_j-1}(\tau)$. Project $f(L_0)$ orthogonally onto $f(V_j)^\perp$ to obtain a lattice $L^\dagger$. Let $\alpha_j(\tau)>0$ be the length of the shortest nonzero vector in $L^\dagger$. Define $t_1(\tau)=\textup{min}(1,(1/2)\alpha_1(\tau)\beta_1(\tau))$. Similarly, define 
\begin{equation*}
    t_j=\textup{min}(1,(1/2)(t_1(\tau)\cdots t_{j-1}(\tau))^{-1}\alpha_j(\tau)\beta_j(\tau))
\end{equation*}
inductively, so that $(t_1(\tau)\cdots t_j(\tau))^{-1}\geq 2(\alpha_j(\tau)\beta_j(\tau))^{-1}$ for all $1\leq j\leq \ell-1$. So far, we have followed the proof of \cite[Lemma 7.3]{avner_cohomology_1997}, nearly word for word, in order to establish notation. We now prove what we want. Recall that we have a one-parameter family of weights for $L_0$ given by the map
\begin{equation*}
    \Phi_\tau:(L_0-\{0\})\times [1,\tau_0]\rightarrow\mathbb{R}_+,
\end{equation*}
which is real-analytic in $\tau$ for any given $x$, and which is normalized continuously in $\tau$ for each $\tau$ so that $\Phi_\tau$ has maximum $1$ for all $\tau$. 

The length $\alpha_j(\tau)$ is that which minimizes $\Phi_\tau(x)\left \langle x,x \right \rangle$ for all $x\in L^\dagger-\{0\}$. Hence, $\alpha_j(\tau)$ varies continuously in $\tau$ for each $1\leq j\leq \ell-1$, since $\Phi_\tau$ is continuous in $\tau$. Now, since $\beta_j(\tau)=\mu_1(\tau)\cdots\mu_{d_j-1}(\tau)$, in order to show that $\beta_j(\tau)$ varies continuously in $\tau$ for each $j$, it suffices to show that for each $j$, the constants $\mu_1(\tau),\cdots, \mu_{d_j-1}(\tau)$ vary continuously in $\tau$, but this is exactly the statement of \cite[Thm.~2]{mcconnell_computing_2020} that $R_\tau(t)$ is continuous in $\tau$. Thus, $t_j(\tau)$ varies continuously in $\tau$ for each $j$, and therefore the elements $t(\tau)\in \mathscr{U}_{\mathscr{F},f}$ form a continuous one-parameter family in $\tau$, as was to be shown.
\end{proof}

Thus, we have constructed a \textit{continuous} one-parameter family of tubular neighborhoods with boundary. Note that one can define analogously, depending on each temperament $\tau$, sets $\Tilde{W}_\mathscr{F}(\tau)$. Applying \cite[Prop.~7.4]{avner_cohomology_1997} at each temperament, we see that the well-rounded retraction is constant on the $\overline{A}_P$ fibres of these neighborhoods. We state this as a corollary of our construction.

\begin{corollary} \label{corIntersectNbarAP}
On the intersection of $N_\mathscr{F}(\tau)$ with an $\overline{A}_P$ fibre of $X(P)$ for a fixed $\tau$, the map $\tilde{R}_\tau(t)$ is constant in $\tau$ and satisfies $\tilde{R}_\tau(t)(N_\mathscr{F}(\tau))=\Tilde{W}_\mathscr{F}(\tau)$.
\end{corollary}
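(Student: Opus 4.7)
The plan is to reduce the corollary to a pointwise-in-$\tau$ application of Proposition \ref{am97prop74}, exploiting that the neighborhoods $N_\mathscr{F}(\tau)$ are built from exactly the ingredients of \cite[Def.~7.5]{avner_cohomology_1997} but with the trivial weights there replaced by the weights $\Phi_\tau$ coming from the well-tempered construction. At each fixed $\tau \in [1,\tau_0]$ we are therefore in the classical setting of \cite[Sec.~7]{avner_cohomology_1997}, only with a weighted notion of ``well-rounded.''

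First, I would observe that the element $t(\tau) \in \mathscr{U}_{\mathscr{F},f}$ used to build $N_\mathscr{F}(\tau)$ is, by construction, chosen to satisfy the conclusion of Lemma \ref{ash84lemma73} for the weights $\Phi_\tau$. Moreover, as emphasized in the opening paragraph of Section \ref{The Borel-Serre Boundary}, the orthogonal scaling group $\mathscr{U}_{\mathscr{F},f}$, the decomposition $S^n = \bigoplus V'_j$, and the resulting $\overline{A}_P$-fibre structure of $X(P)$ over $e(P)$ are all determined by the archimedean inner product on $S^n$ and are independent of $\tau$. What varies with $\tau$ is only the cutoff $\rho_1 < t_1(\tau)$ that determines the extent of $N_\mathscr{F}(\tau)$ along each fibre.

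Next, I would invoke Theorem~2 of \cite{mcconnell_computing_2020} to identify the restriction of $\tilde{R}_\tau(t)$ to the slice $X \times \{\tau\}$ (at $t=1$) with the weighted well-rounded retraction $r_\tau$ built from $\Phi_\tau$, whose image is $\tilde{W}_\tau$. Applying Proposition \ref{am97prop74} with weights $\Phi_\tau$ then asserts directly that $r_\tau$ is constant on each intersection $N_\mathscr{F}(\tau) \cap F$ for $F$ an $\overline{A}_P$-fibre of $X(P)$, and that $r_\tau(N_\mathscr{F}(\tau)) = \tilde{W}_\mathscr{F}(\tau)$, where $\tilde{W}_\mathscr{F}(\tau)$ is the $\tau$-weighted analogue of the subcomplex $\tilde{W}_\mathscr{F}$ from Section \ref{Boundary Neighborhoods for a Fixed Set of Weights}. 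Transferring this through the slicewise identification with $\tilde{R}_\tau(t)$ yields both assertions of the corollary at the given $\tau$.

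The only point requiring inspection, and the main (mild) obstacle, is to verify that the proof of \cite[Prop.~7.4]{avner_cohomology_1997} remains valid when the trivial weights there are replaced by $\Phi_\tau$. Its argument uses only (i) the conclusion of Lemma \ref{ash84lemma73}, already encoded in the choice of $t(\tau)$, and (ii) equivariance of the well-rounded retraction with respect to the weight-independent scaling group $\mathscr{U}_{\mathscr{F},f}$. Since neither ingredient is affected by the change of weights, the argument transfers verbatim at each $\tau$, and the corollary follows.
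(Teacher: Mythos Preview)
Your proposal is correct and takes essentially the same approach as the paper, which simply states that the corollary follows by applying \cite[Prop.~7.4]{avner_cohomology_1997} at each temperament~$\tau$. Your version is more detailed---you explicitly check that the scaling group and fibre structure are weight-independent and that the slicewise restriction of $\tilde{R}_\tau(t)$ coincides with the weighted retraction $r_\tau$---but the underlying argument is the same pointwise reduction to Proposition~\ref{am97prop74}.
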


In effect, this corollary provides the appropriate extension of Proposition~\ref{am97prop74} to the setting of the well-tempered complex. Our next task will be to extend Proposition~\ref{am97prop75}.

\section{A Family of Central Tiles}\label{A Family of Central Tiles}
We begin by recalling facts on tilings of symmetric spaces from Section~2 of \cite{saper_tilings_1997}. Denote by $\mathscr{P}$ the set of $\mathbb{Q}$-parabolic subgroups of $G$. Given any $P\in \mathscr{P}$, we can form $\overline{A}_P$ as in Section~\ref{The Borel-Serre Compactification}, and $\overline{A}_P$ acts on $\overline{X}$ by the geodesic action \footnote{Since our convention is for $K$ to act on $G$ on the right, the natural choice is the $NAK$ decomposition of $G$, whence the geodesic action of $A$ is a left action. This is consistent with Saper's convention in \cite{saper_tilings_1997}.}. We shall denote this action by the symbol $\mathbf{o}$. Denote by $\overline{A}_P(1)$ the strictly dominant cone for the pair $(\overline{A}_P,\Delta)$, which is an acute cone defined by the stipulation that an element $x\in \overline{A}_P$ is strictly dominant if $x^{\alpha}>1$ for all $\alpha\in \Delta$ (our fixed system of simple roots, as in Section~\ref{The Borel-Serre Compactification}). With these preliminaries, we have the following definition.

\begin{defn}[\cite{saper_tilings_1997}, Def.~2.1] \label{defSaperTile}
A \textit{tiling} of $\overline{X}$ is a cover by a collection of disjoint sets $\overline{X}_P$, one for each $P\in\mathscr{P}$, such that the following properties are satisfied:
\begin{enumerate}
    \item For the maximal parabolic $P=G$, the set $\overline{X}_G$ is a closed submanifold of the interior $X\subset \overline{X}$ with $\textup{dim}(\overline{X}_G)=\textup{dim}(X)$.
    \item The closed boundary faces of $\overline{X}_G$ form a set $\left \{ \partial^P\overline{X}_G \right \}_{P\in\mathscr{P}}$ for which the bijection $P\mapsto \partial^P\overline{X}_G$ preserves inclusions.
    \item For each closed boundary face $\partial^P\overline{X}_G$, there is a $b_P\in A_P$ and a corresponding canonical cross-section $\left \{ b_P \right \}\times e(P)$ such that $\partial^P\overline{X}_G\subseteq \left \{ b_P \right \}\times e(P)$.
    \item For each $P\in \mathscr{P}$, the set $\overline{X}_P$ is swept out by the geodesic action of $\overline{A}_P(1)$ on $\partial^P\overline{X}_G$. That is, $\overline{X}_P=\partial^P\overline{X}_G\;\mathbf{o}\;\overline{A}_P(1)$.
\end{enumerate}
\end{defn}

It will be necessary for us to make a clever choice of tiling, varying with the temperament $\tau$. As such, we require several facts from \cite{saper_tilings_1997} concerning the existence of tilings. Denote by $\mathscr{P}_1\subset \mathscr{P}$ the set of maximal $\mathbb{Q}$-parabolic subgroups of $G$.
\begin{defn}[\cite{saper_tilings_1997}, Def.~2.6]
The parameter of a tiling is a collection $\textbf{b}=(b_P)_{P\in \mathscr{P}_1}$ of positive real numbers satisfying Property 2 of Definition~\ref{defSaperTile}.
\end{defn}

In particular, the group $G$ determines a space of all parameters $\mathscr{B}=\prod_{Q\in \mathscr{P}_1}A_Q\cong (\mathbb{R}_{>0})^{\mathscr{P}_1}$ and a tiling (if it exists) is uniquely determined by a choice of parameter $\mathbf{b}\in\mathscr{B}$ (\cite{saper_tilings_1997}, comment at the bottom of page 176). Note that one determines the canonical cross-sections $\{b_P\}\times e(P)$ for a given $P\in\mathscr{P}-\mathscr{P}_1$ by intersecting the canonical cross-sections of all maximal parabolics containing $P$ (\cite{saper_tilings_1997}, Remark~1, page 176).

At each temperament $\tau$, we need to pick a tiling of the fibre over $\tau$ for the projection onto the second factor $\overline{X}\times [1,\tau_0]\rightarrow [1,\tau_0]$, and we need to do so in such a way that the central tiles of these tilings vary continuously in $\tau$. This will allow us to mirror the argument in Section~8 of \cite{avner_cohomology_1997} to prove that $\tilde{R}_\tau(t)$ extends to a deformation retraction. While varying continuously in $\tau$, we will see that we can in fact choose a family of tilings that is constant in $\tau$, so long as we choose our parameter to be sufficiently large.

For each $\tau$, we define a neighborhood $B(\tau)$ of $\partial \overline{X}$ in $\overline{X}$, such that $B(\tau)\times \{\tau\}$ is a neighborhood of $\partial \overline{X}\times \{\tau\}\subset \overline{X}\times [1,\tau_0]$. Our definition generalizes that of the neighborhood $B$ in \cite[(8.2)]{avner_cohomology_1997}. We take a union over the finitely many $\mathbb{Q}$-flags $\mathscr{F}$ representing classes of $\mathbb{Q}$-flags modulo $\Gamma$, and define
\begin{equation*}
    B(\tau)=\bigcup_\mathscr{F}\bigcup_{\gamma\in \Gamma}\gamma\cdot \overline{N}_\mathscr{F}(\tau).
\end{equation*}

Then, for each temperament $\tau$, we continue as in \cite[(8.2)]{avner_cohomology_1997} and choose a central tile $X_0(\tau)\subseteq \overline{X}$ such that
\begin{enumerate}
    \item $\partial^PX_0(\tau)\subseteq \bigcup_{\mathscr{F'}}\bigcup_{\gamma\in \Gamma\cap P}(\gamma\cdot \overline{N}_{\mathscr{F'}}(\tau))$ for any proper $\mathbb{Q}$-parabolic subgroup $P$ with associated flag $\mathscr{F}$. Here, we are taking a finite union over $\mathscr{F'}$ representing that $\Gamma\cap P$ classes of $\mathbb{Q}$-flags $\mathscr{F'}\supseteq \mathscr{F}$.
    \item $\partial X_0(\tau)\subseteq B$ and $X_0(\tau)\cup B=\overline{X}$.
\end{enumerate}
We omit at this stage the third condition of \cite[(8.2)]{avner_cohomology_1997}, which requires that $\Tilde{W}_\tau\subseteq \textup{int}(X_0(\tau))$. Instead, we choose a one-parameter family $X_0(\tau)$ of central tiles satisfying only conditions~(1) and~(2), as above.

\begin{prop} \label{propSaperForTau}
There exists a subset $\hat{X}_0\subseteq \overline{X}$ such that for each $\tau\in [1,\tau_0]$, there exists a tiling of $\overline{X}\times \{\tau\}$ with central tile $\hat{X}_0\times \{\tau\}$ satisfying properties~(1) and~(2) above. Moreover, the well-tempered complex $\tilde{W}^+$ is contained in the interior of $\hat{X}_0\times [1,\tau_0]$.
\end{prop}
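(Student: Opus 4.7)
The plan is to invoke the parametrization of tilings by $\mathbf{b} \in \mathscr{B}$ recalled from \cite{saper_tilings_1997} and to exploit compactness of $[1,\tau_0]$ together with the continuity established in the preceding lemma to select a single parameter $\hat{\mathbf{b}}$ whose induced central tile serves as a witness at every temperament.

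First, I would note that by the discussion following Definition~\ref{defSaperTile}, each tiling of $\overline{X}$ is uniquely determined by a parameter $\mathbf{b} = (b_P)_{P \in \mathscr{P}_1}$, and that enlarging the components $b_P$ monotonically pushes the canonical cross-sections $\{b_P\}\times e(P)$ deeper into the boundary, hence deeper into any fixed tubular neighborhood of $\partial\overline{X}$. For each fixed $\tau$, the fibrewise analogue of the proposition is essentially the construction of Section~8 of \cite{avner_cohomology_1997}: choose the $b_P$ large enough, depending on $\tau$, so that the induced central tile $X_0(\tau)$ satisfies conditions~(1) and~(2) relative to $B(\tau)$.

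Next, I would promote this pointwise existence to a uniform choice. For each $\mathbf{b}\in\mathscr{B}$, let $U(\mathbf{b})\subseteq [1,\tau_0]$ denote the set of $\tau$ at which $\mathbf{b}$ is admissible, meaning its tile satisfies (1) and (2) relative to $B(\tau)$. Because each $\overline{N}_{\mathscr{F}}(\tau)$ varies continuously in $\tau$ by the preceding lemma, and only finitely many $\Gamma$-classes of $\mathbb{Q}$-flags enter the definition of $B(\tau)$, each set $U(\mathbf{b})$ is open in $[1,\tau_0]$. Moreover, $U(\mathbf{b})\subseteq U(\mathbf{b}')$ whenever $\mathbf{b}\le\mathbf{b}'$ componentwise, since pushing the cross-sections further in only reinforces the required inclusions. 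The $U(\mathbf{b})$ thus form an open cover of the compact interval $[1,\tau_0]$, so finitely many $\mathbf{b}^{(1)},\ldots,\mathbf{b}^{(N)}$ suffice. Taking $\hat{\mathbf{b}}$ to be the componentwise maximum of these yields a single tiling whose central tile $\hat{X}_0$ satisfies (1) and (2) at every $\tau$.

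Finally, for the interior containment $\tilde{W}^+\subseteq \textup{int}(\hat{X}_0\times [1,\tau_0])$, I would repeat the compactness argument. At each $\tau$, the slice $\tilde{W}_\tau$ is $\Gamma$-cocompact, and the argument underlying Proposition~\ref{am97prop75} shows that $\tilde{W}_\tau$ lies in the interior of any central tile whose parameter is sufficiently large. Continuity of $\tilde{W}_\tau$ in $\tau$ from Theorem~2 of \cite{mcconnell_computing_2020}, combined with compactness of $[1,\tau_0]$, allows a further componentwise enlargement of $\hat{\mathbf{b}}$ that secures $\tilde{W}_\tau\subseteq \textup{int}(\hat{X}_0)$ uniformly in $\tau$, and therefore $\tilde{W}^+\subseteq \textup{int}(\hat{X}_0)\times[1,\tau_0]\subseteq \textup{int}(\hat{X}_0\times[1,\tau_0])$. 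The \textbf{main obstacle} is the bookkeeping that these successive enlargements of $\hat{\mathbf{b}}$ do not conflict; this is resolved by the monotonicity noted above, since admissibility in each of the three conditions is preserved under componentwise enlargement of $\mathbf{b}$.
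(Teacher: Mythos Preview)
Your proposal is correct and takes essentially the same approach as the paper: both invoke Saper's parametrization of tilings by $\mathbf{b}\in\mathscr{B}$, use compactness of $[1,\tau_0]$ together with monotonicity of admissibility in $\mathbf{b}$, and pass to a single sufficiently large $\Gamma$-invariant parameter. The only cosmetic difference is that the paper directly sets $\hat b_P=\sup_{\tau}b_P(\tau)$ after choosing admissible parameters $b_P(\tau)$ for each~$\tau$, whereas you dualize and extract a finite subcover from the open sets $U(\mathbf{b})$ before taking a componentwise maximum; your formulation has the minor advantage of making finiteness of the supremum explicit.
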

\begin{proof}
As in \cite{avner_cohomology_1997}, we can give a soft compactness argument saying we can push each central tile $X_0(\tau)$, individually, closer to the boundary if need be. In order to do this, one still needs to enlarge the central tile in a way that preserves the correspondence between its boundary faces and the rational parabolics of $\textup{SL}_n$. The way to do this is to select a larger parameter $\textbf{b}(\tau)$ (recall that \cite{saper_tilings_1997} makes corners by adding $\infty$, not $0$), in our case at each temperament $\tau$. Fortunately, \cite{saper_tilings_1997} proves that tilings always exist so long as the parameter $\textbf{b}(\tau)$ is sufficiently large (and $\Gamma$-invariant), so the enlargement of the central tile proposed in \cite{avner_cohomology_1997} is possible and can be done at each temperament \cite[Thm.~5.7]{saper_tilings_1997}. 
Consider this enlargement of $X_0(\tau)$ for each temperament $\tau$. Then, for each maximal rational parabolic subgroup $P\in\mathscr{P}_1$, we define:
\begin{equation*}
    \hat{b}_P=\sup_{\tau\in [1,\tau_0]}b_P(\tau)\;\;\in \mathbb{R}_{>0}
\end{equation*}
where $b_P(\tau)$ is the corresponding parameter for $P$ at temperament $\tau$ with respect to the enlargement of $X_0(\tau)$. This gives a new set of $\Gamma$-invariant parameters $\textbf{b}=(\hat{b}_P)_{P\in\mathscr{P}_1}$, which we may take uniformly now at each temperament $\tau$ to define the same tiling of $\overline{X}\times \{\tau\}$ for all $\tau$. Denote by $\hat{X}_0$ the common central tile of this tiling. By construction, $\hat{X}_0\times [1,\tau_0]$ now defines a one-parameter family of central tiles that contains the whole well-tempered retract, and at each $\tau$, $\hat{X}_0$ satisfies properties 1 and 2.
\end{proof}

\section{Extending to the Boundary}\label{Extending to the Boundary}
We have completed the necessary geometric work to prove that there exists a deformation retraction of $\overline{X}\times [1,\tau_0]$ onto the well-tempered complex $\tilde{W}^+$, which extends the map $\tilde{R}_\tau(1):X\times [1,\tau_0]\rightarrow \tilde{W}^+$ of Section~\ref{The Well-Tempered Complex}. Consider the map
\begin{equation*}
    \overline{R}_\tau(t):(\overline{X}\times [1,\tau_0])\times [0,1]\rightarrow \overline{X}\times [1,\tau_0]
\end{equation*}
defined as follows. First, we declare that $\overline{R}_\tau(t)(x)=\tilde{R}_\tau(t)(x)$ for all $x\in X\times [1,\tau_0]$. Now, suppose that $\bar{x}\in e(P)$ for a unique $P$ corresponding to a flag $\mathscr{F}$. Then, $\bar{x}\in \overline{N}_\mathscr{F}(\tau)$ for some $\tau$, by Lemma~\ref{ash84lemma73}, and there is a unique fibre $F_{\bar{x}}$ of $\overline{N}(\tau)_\mathscr{F}$ on which $\bar{x}$ lies. Let $\overline{R}_\tau(t)(\bar{x})=\tilde{R}_\tau(t)(F_{\bar{x}}\cap X)$ be the common value of the retraction on this fibre. This is well defined by Corollary~\ref{corIntersectNbarAP}. In particular, we have:
\begin{prop} \label{barR}
The map $\overline{R}_\tau(t)$ is a $\Gamma_0$-equivariant retraction onto $\tilde{W}^+$, continuous in both $\tau$ and $t$, and constant on the $\overline{A}_P$-fibres of $\overline{N}_\mathscr{F}(\tau)$ for each $\tau$.
\end{prop}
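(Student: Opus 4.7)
The plan is to verify the four assertions of the proposition in sequence, essentially porting the argument of \cite[Prop.~7.5]{avner_cohomology_1997} into the $\tau$-parametrized setting, with the joint continuity in $(\bar{x},\tau,t)$ being the one genuinely new ingredient.

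First I would confirm that $\overline{R}_\tau(t)$ lands in $\tilde{W}^+$ and restricts to the identity there. On the interior $X\times [1,\tau_0]$ this is immediate from the corresponding property of $\tilde{R}_\tau(t)$ recorded in \cite[Thm.~2]{mcconnell_computing_2020}. For $\bar{x}\in e(P)$ with $\bar{x}\in \overline{N}_\mathscr{F}(\tau)$, Corollary~\ref{corIntersectNbarAP} guarantees that $\tilde{R}_\tau(t)(F_{\bar{x}}\cap X)$ is a well-defined single point, lying in $\tilde{W}_\mathscr{F}(\tau)\subseteq \tilde{W}_\tau\times\{\tau\}\subseteq \tilde{W}^+$. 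Constancy on the $\overline{A}_P$-fibres is then part of the very definition of $\overline{R}_\tau(t)$, since any two boundary points in the same fibre of $\overline{N}_\mathscr{F}(\tau)$ determine the same $F_{\bar{x}}$ and so the same retracted value.

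Next, $\Gamma_0$-equivariance reduces to showing that the family of fibred neighborhoods $\overline{N}_\mathscr{F}(\tau)$ is itself $\Gamma_0$-equivariant. Since $\Gamma_0$ permutes $\mathbb{Q}$-flags and the construction of $\overline{N}_\mathscr{F}(\tau)$ only refers to $\mathbb{Z}$-lattice data, the canonical cross-section, and the orthogonal scaling groups $\overline{\mathscr{U}}_{\mathscr{F},f_1}$, one has $\gamma\cdot \overline{N}_\mathscr{F}(\tau)=\overline{N}_{\gamma\mathscr{F}}(\tau)$ and $\gamma\cdot F_{\bar{x}}=F_{\gamma\bar{x}}$. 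Combined with the $\Gamma_0$-equivariance of $\tilde{R}_\tau(t)$ recorded in Section~\ref{The Well-Tempered Complex}, this gives the $\Gamma_0$-equivariance of $\overline{R}_\tau(t)$.

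The principal obstacle is joint continuity. Continuity on $X\times [1,\tau_0]\times [0,1]$ is inherited from $\tilde{R}_\tau(t)$; the delicate case is a boundary point $(\bar{x},\tau_*,t_*)$ with $\bar{x}\in e(P)$. Given $(x_k,\tau_k,t_k)\to (\bar{x},\tau_*,t_*)$, the preceding lemma shows that the thresholds $t_j(\tau)$ vary continuously in $\tau$, hence the neighborhoods $\overline{N}_\mathscr{F}(\tau)$ form a continuous family, so for $k$ large we have $x_k\in \overline{N}_\mathscr{F}(\tau_k)$. Using the explicit product description $\overline{N}_\mathscr{F}(\tau)\cong e(P)\times (1,\infty]^{\ell-1}$ and its continuous dependence on $\tau$, pick interior representatives $y_k\in F_{x_k}\cap X$ convergent to some $y\in F_{\bar{x}}\cap X$. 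Corollary~\ref{corIntersectNbarAP} then gives $\overline{R}_{\tau_k}(t_k)(x_k)=\tilde{R}_{\tau_k}(t_k)(y_k)$, which by the joint continuity of $\tilde{R}_\tau(t)$ on $X\times [1,\tau_0]\times [0,1]$ converges to $\tilde{R}_{\tau_*}(t_*)(y)=\overline{R}_{\tau_*}(t_*)(\bar{x})$. The one subtlety to address is when $\bar{x}$ lies in the closure of several boundary strata, so that a priori several choices of $\mathscr{F}$ are available: here the inclusion-reversing correspondence between parabolics and corners, together with the compatibility of the $\overline{N}_\mathscr{F}(\tau)$ under flag inclusions, shows that all such choices produce the same retracted value, again by Corollary~\ref{corIntersectNbarAP}.
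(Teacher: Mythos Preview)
Your proposal is correct and follows essentially the same approach as the paper. The paper's own proof is extremely terse—it simply cites \cite[Prop.~8.3]{avner_cohomology_1997} for the fixed-$\tau$ statement ($\Gamma_0$-equivariance, retraction, constancy on fibres) and then invokes Corollary~\ref{corIntersectNbarAP} together with \cite[Thm.~2]{mcconnell_computing_2020} for continuity in~$\tau$—whereas you have unpacked those citations into the explicit verification (fibre-constancy from the definition, equivariance via $\gamma\cdot\overline{N}_\mathscr{F}(\tau)=\overline{N}_{\gamma\mathscr{F}}(\tau)$, and a sequential argument for joint continuity at boundary points using the continuous variation of the thresholds $t_j(\tau)$ from the preceding lemma). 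Your added remark on compatibility across overlapping boundary strata is a detail the paper leaves implicit in its citation of \cite{avner_cohomology_1997}.
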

\begin{proof}
Restricted to each temperament $\tau$, the map $\overline{R}_\tau(t)$ is a $\Gamma_0$-equivariant retraction \cite[Prop.~8.3]{avner_cohomology_1997}. Combining Corollary~\ref{corIntersectNbarAP} above with \cite[Thm.~2]{mcconnell_computing_2020} shows that $\overline{R}_\tau(t)$ is continuous in $\tau$ and extends as a retraction in from the boundary.
\end{proof}

By \cite{saper_tilings_1997} and \cite[(8.1)]{avner_cohomology_1997}, there is a family of $\Gamma_0$-equivariant, piecewise-analytic deformation retractions $r_{t,S}(\tau):\overline{X}\times [0,1]\rightarrow \hat{X}_0\times \{\tau\}$ for each $\tau\in [1,\tau_0]$, such that $r_{0,S}=\textup{id}_{\overline{X}}$ and $r_{1,S}(\tau)=r_S(\tau)$, where for each $\tau$, the map $r_S(\tau)$ is defined uniquely by the property that $r_S(\tau)(y\; \mathbf{o}\; a)=y$ for all $y\in \partial^P(\hat{X}_0\times \{\tau\})$ and $a\in \overline{A}_p$, $a\leq 1$. Moreover, for $y\in \partial^P(\hat{X}_0\times \{\tau\})$ with $P$ the smallest rational parabolic satisfying this property, $r_S(\tau)(y')=y$ implies that $y'=y\;\mathbf{o}\; a$ for some $a\in \overline{A}_p$, $a\leq 1$. By Proposition~\ref{propSaperForTau}, this extends to a map
\begin{equation*}
    r_{t,S}: (\overline{X}\times [1,\tau_0])\times [0,1]\rightarrow \hat{X}_0\times [1,\tau_0].
\end{equation*}

Using this construction, we promote the retraction of Proposition~\ref{barR} above to a deformation retraction. Define a map
\begin{equation*}
    D(\tau,t):(\overline{X}\times [1,\tau_0])\times [0,1]\rightarrow \overline{X}\times [1,\tau_0]
\end{equation*}
piecewise by the rule
\begin{equation*}
    D(\tau,t)=\left\{\begin{matrix}
r_{2t,S} & t\in [0,1/2]\\ 
(\overline{R}_\tau(t))(r_S(x)) & t\in [1/2,1]
\end{matrix}\right.,
\end{equation*}
which the following theorem shows to be our desired deformation retraction.

\begin{theorem} \label{thmD}
The map $D(\tau,t)$ is a $\Gamma_0$-equivariant deformation retraction of $\overline{X}\times [1,\tau_0]$ onto $\tilde{W}^+$. Moreover, $D(\tau,1)=\overline{R}_\tau$.
\end{theorem}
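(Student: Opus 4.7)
The plan is to check the defining properties of a $\Gamma_0$-equivariant deformation retraction for the piecewise map $D(\tau,t)$ and separately verify the identity $D(\tau,1)=\overline{R}_\tau$. First I would dispatch the easy conditions: at $t=0$, the first piece gives $r_{0,S}=\textup{id}_{\overline{X}}$, so $D(\tau,0)=\textup{id}$; the two pieces agree at $t=1/2$ after the implicit reparametrization of $\overline{R}_\tau$, since $r_{1,S}=r_S$ and $\overline{R}_\tau(1/2)$ fixes the image of $r_S$. Pointwise fixing of $\tilde{W}^+$ then holds on $t\in[0,1/2]$ by Proposition~\ref{propSaperForTau}, which places $\tilde{W}^+$ in the interior of $\hat{X}_0\times[1,\tau_0]$; and on $t\in[1/2,1]$ from the facts that $r_S$ fixes the central tile pointwise and $\overline{R}_\tau(t)$ is a retraction onto $\tilde{W}^+$ for every $t$ (Proposition~\ref{barR}).

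The crux, and the step I expect to be the main obstacle, is the identity $D(\tau,1)=\overline{R}_\tau$, which reduces to the claim $\overline{R}_\tau\circ r_S=\overline{R}_\tau$. For $x$ already in the central tile there is nothing to prove. For $x\notin\hat{X}_0$, condition~(2) on $\hat{X}_0$ places $x$ in some $\gamma\cdot\overline{N}_\mathscr{F}(\tau)$; by $\Gamma$-equivariance we may take $\gamma=1$. The Saper retraction $r_S$ moves $x$ backward along its $\overline{A}_P$-orbit to a point $y\in\partial^P\hat{X}_0\subseteq\overline{N}_\mathscr{F}(\tau)$ (by condition~(1)), and the entire arc from $x$ to $y$ lies in a single $\overline{A}_P$-fibre of $\overline{N}_\mathscr{F}(\tau)$, because such fibres are precisely the orbits of the geodesic action. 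Since $\overline{R}_\tau$ is constant on these fibres (Proposition~\ref{barR} and Corollary~\ref{corIntersectNbarAP}), we conclude $\overline{R}_\tau(r_S(x))=\overline{R}_\tau(x)$, as required.

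Finally, $\Gamma_0$-equivariance of $D$ is inherited from that of $r_{t,S}$ (from Saper, using the $\Gamma$-invariance of the parameter in Proposition~\ref{propSaperForTau}) and of $\overline{R}_\tau$ (Proposition~\ref{barR}). Joint continuity in $(\tau,t)$ follows from continuity of $r_{t,S}$ in $t$, which is trivially continuous in $\tau$ because the central tile $\hat{X}_0$ was chosen $\tau$-independent in Proposition~\ref{propSaperForTau}, together with the joint continuity of $\overline{R}_\tau(t)$ recorded in Proposition~\ref{barR}.
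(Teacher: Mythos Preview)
Your proof is correct and follows the same route as the paper, which simply invokes Proposition~\ref{barR} for the deformation-retraction claim and cites \cite[Thm.~8.4]{avner_cohomology_1997} applied at each temperament for the identity $D(\tau,1)=\overline{R}_\tau$; your argument that $\overline{R}_\tau\circ r_S=\overline{R}_\tau$ via constancy on $\overline{A}_P$-fibres inside the neighborhoods $\overline{N}_\mathscr{F}(\tau)$ is exactly the content of that cited theorem, now carried out uniformly in~$\tau$. One small remark: once you invoke the implicit reparametrization $t\mapsto 2t-1$ on the second piece, matching at $t=1/2$ reduces to $\overline{R}_\tau(0)=\textup{id}$, so the clause about ``$\overline{R}_\tau(1/2)$ fixes the image of $r_S$'' is unnecessary (and not true in general).
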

\begin{proof}
The first assertion follows from Proposition~\ref{barR} above. The second follows by applying \cite[Thm.~8.4]{avner_cohomology_1997} at each temperament.
\end{proof}

In the next section, we will build a series of spectral sequences, following \cite{avner_cohomology_1997}, that will be essential to the statement and proof of our main theorem. To do so, we will require one more result generalizing that of \cite{avner_cohomology_1997}.

\begin{prop}
Let $P$ be a proper, rational parabolic subgroup with associated flag $\mathscr{F}$. Then, for each $\tau$, the map $\overline{R}_\tau$ induces a $(\Gamma_0\cap P)$-equivariant homotopy equivalence between $\bar{e}(P)$ and $\Tilde{W}_\mathscr{F}$.
\end{prop}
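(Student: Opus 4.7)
The plan is to reduce the statement to the analogous fixed-weights result in \cite{avner_cohomology_1997}, applied at each temperament $\tau$ with the weights $\Phi_\tau$. Fix $\tau\in [1,\tau_0]$. The restriction of $\overline{R}_\tau$ to the fibre $\overline{X}\times\{\tau\}$ is, by construction, the extended well-rounded retraction $\bar{r}_\tau:\overline{X}\to \tilde{W}_\tau$ associated to $\Phi_\tau$. As emphasized at the beginning of Section~\ref{The Borel-Serre Boundary}, although $\Phi_\tau$ alters the set of minimal vectors, it leaves untouched both the orthogonal decomposition of $S^n$ and the scaling group $\mathscr{U}_{\mathscr{F},f}\cong A_P$. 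Consequently the geometric apparatus of Section~\ref{Boundary Neighborhoods for a Fixed Set of Weights}---canonical cross-section $C_1$, tubular neighborhood $\overline{N}_\mathscr{F}(\tau)\cong e(P)\times (1,\infty]^{\ell-1}$, and the fibration of $\overline{N}_\mathscr{F}(\tau)$ over $\bar{e}(P)$---is available without change, with the target now the weighted object $\tilde{W}_\mathscr{F}(\tau)$.

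With this in place, I would follow the argument of \cite[Prop.~8.6]{avner_cohomology_1997} (the analogue for fixed trivial weights). First, Corollary~\ref{corIntersectNbarAP} says that $\bar{r}_\tau$ is constant on the $\overline{A}_P$-fibres of $\overline{N}_\mathscr{F}(\tau)$, so it descends along $\overline{N}_\mathscr{F}(\tau)\to \bar{e}(P)$ to a map $\alpha:\bar{e}(P)\to \tilde{W}_\mathscr{F}(\tau)$. For the reverse direction, the canonical cross-section provides an embedding $\tilde{W}_\mathscr{F}(\tau)\hookrightarrow X(P)$ along which one may send each $f\in \tilde{W}_\mathscr{F}(\tau)$ to the limit of $\rho\cdot f$ under the geodesic action of $\rho\in \overline{A}_P$ running along the strictly dominant direction $\overline{A}_P(1)$ out to $\infty$, defining a map $\beta:\tilde{W}_\mathscr{F}(\tau)\to \bar{e}(P)$. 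The composition $\alpha\circ \beta$ is the identity on $\tilde{W}_\mathscr{F}(\tau)$ (the retraction is the identity on its image), while $\beta\circ \alpha$ is homotopic to the identity via the geodesic contraction along the $\overline{A}_P$-fibres of $\overline{N}_\mathscr{F}(\tau)$.

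Equivariance under $\Gamma_0\cap P$ is essentially automatic: $\Gamma_0\cap P$ preserves $\mathscr{F}$, hence also $\overline{N}_\mathscr{F}(\tau)$, the canonical cross-section $C_1$, and the scaling group $\mathscr{U}_{\mathscr{F},f}$. Combined with the $\Gamma_0$-equivariance of $\overline{R}_\tau$ from Proposition~\ref{barR}, both $\alpha$ and $\beta$, together with the sliding homotopy, become $(\Gamma_0\cap P)$-equivariant. The main obstacle---indeed essentially the only substantive point---is verifying that the homotopies constructed in \cite[Prop.~8.6]{avner_cohomology_1997} for the Euclidean inner product go through when $\langle\cdot,\cdot\rangle$ is replaced by the weighted pairing $\Phi_\tau(x)\langle x,x\rangle$. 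Since the relevant fibre geometry ($\overline{\mathscr{U}}_{\mathscr{F},f}$, the decomposition $S^n=\bigoplus V_j'$, and the cross-section structure) is invariant under the change of weights, as already noted in Section~\ref{The Borel-Serre Boundary}, the argument of \cite{avner_cohomology_1997} transfers essentially verbatim.
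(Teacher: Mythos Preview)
Your proposal is correct and follows exactly the paper's approach: apply the corresponding result of \cite{avner_cohomology_1997} at each fixed temperament~$\tau$, using the observation that the orthogonal scaling group and cross-section structure are weight-independent. The paper's proof is in fact just the one-liner ``Apply \cite[Thm.~8.5]{avner_cohomology_1997} at each temperament,'' so your version simply unpacks that citation (note that the relevant result is numbered 8.5 there, not 8.6).
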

\begin{proof}
Apply \cite[Thm.~8.5]{avner_cohomology_1997} at each temperament.
\end{proof}

\section{Cohomology at Infinity}\label{Cohomology at Infinity}
We set up notation, following \cite{avner_cohomology_1997}. Let $\Gamma$ denote any finite-index subgroup of $\Gamma_0$. Denote by $\Phi_l$ the finite set of representatives of $\Gamma$-equivalence classes of $\mathbb{Q}$-flags $\mathscr{F}$ of length $l$. The maximal $\mathbb{Q}$-parabolic subgroups are parametrized by the set $\mathscr{E}=\{(\Gamma\cap P)\backslash\bar{e}(P)\; | \; \mathscr{F}\in\Phi_2\}$. Recall that $\bar{e}(P)$ corresponds to the set $\overline{X}\times^{A_P}\{\infty\}^{l-1}$ in the respective corner of the Borel-Serre bordification $\overline{X}$, where the union of the $\Bar{e}(P)$ form as a set the ``border'' of $\overline{X}$. Thus, we see that for $\mathscr{F}\in\Phi_2$ one has a covering map $(\Gamma\cap P)\backslash\bar{e}(P)\rightarrow \Gamma \bs \partial\overline{X}$. Here, $\Gamma\cap P$ has finite index in $\Gamma$. In particular, this realizes $\mathscr{E}$ as a cover of $\Gamma\bs\partial\overline{X}$.

Letting $\Gamma$ act trivially on any subinterval $I\subseteq [1,\tau_0]$ (we allow also the degenerate case of $I$ a single point), we can define a Čech complex of singular $q$-cochains for all $q\geq 0$ and all $0\leq p\leq n-2$
\begin{equation*}
    \mathscr{X}^{p,q}_I=\bigoplus_{\mathscr{F}\in \Phi_{p+2}}C^q((\Gamma\cap P)\backslash(\bar{e}(P)\times I)).
\end{equation*}
This is a first-quadrant double complex with vertical differential $(-1)^p$ times the usual coboundary maps and horizontal differential the chain map (with appropriate signs) induced by the inclusions $\bar{e}(P')\hookrightarrow \bar{e}(P)$. As in \cite{avner_cohomology_1997}, which follows [BT82], the total complex $\mathscr{X}^{p,q}_I$ computes the singular cohomology
\begin{equation*}
   H^\ast(\Gamma\backslash(\partial\overline{X}\times I))=H^\ast((\Gamma\backslash(\partial\overline{X}))\times I)=H^{\ast}(\Gamma\backslash\partial\overline{X}).
\end{equation*}

In particular, considering the filtration $\{\mathscr{X}^{p,q}|p\geq r\}$ for some fixed $r$, we get a type-II spectral sequence $E^{p,q}_{r,\mathscr{X}_I}$ for $\mathscr{X}^{p,q}_I$ abutting to the cohomology of $\mathscr{X}^{p,q}_I$ in such a way that
\begin{equation*}
    E^{p,q}_{1,\mathscr{X}_I}=\bigoplus_{\mathscr{F}\in \Phi_{p+2}}H^q((\Gamma\cap P)\backslash(\bar{e}(P)\times I)) \Rightarrow H^{p+q}(\Gamma\backslash\partial\overline{X}).
\end{equation*}
Let $\tilde{W}^I_\mathscr{F}$ be the total space of a fibration over $I$ with fibre $\Tilde{W}_\mathscr{F}(\tau)$ for each $\tau\in I$. We can define a new double complex
\begin{equation*}
    \mathscr{W}^{p,q}_I=\bigoplus_{\mathscr{F}\in \Phi_{p+2}}C^q((\Gamma\cap P)\backslash\tilde{W}^I_\mathscr{F})
\end{equation*}
with corresponding type-II spectral sequence $E^{p,q}_{1,\mathscr{W}_I}$ abutting to $H^{p+q+1}(\mathscr{W}^\ast_I)$, where the vertical and horizontal differentials of the double complex $\mathscr{W}^{p,q}_I$ are combined to form a single complex $\mathscr{W}^\ast_I$.

By \cite[Lemma~9.4]{avner_cohomology_1997}, for the single-column double complex
\begin{equation*}
    \hat{\mathscr{X}}^{p,q}_I=\left\{\begin{matrix}
C^q(\Gamma\backslash(\overline{X}\times I)) & p=0 \\
0 & p>0 \\
\end{matrix}\right.
\end{equation*}
there is a chain map $\hat{\mathscr{X}}^{p,q}\rightarrow \mathscr{X}^{p,q}$ inducing via type-I spectral sequences the canonical restriction map $H^q((\Gamma\backslash\overline{X})\times I)\rightarrow H^q((\Gamma\backslash\partial\overline{X})\times I)$ on the corresponding abutments. Forming the corresponding single-column double complex $\hat{\mathscr
W}^{p,q}_I$, one has a chain map $\psi_I:\hat{\mathscr
W}^{p,q}_I\rightarrow \mathscr
W^{p,q}_I$ inducing a map $\psi^\ast_I$ on abutments. We will sometimes denote these maps by $\psi$ and $\psi^\ast$, respectively, when the relevant $I$ is clear from context.

\begin{theorem}
We have spectral sequences abutting to the cohomology of $\mathscr{W}^\ast_I$ as follows
\begin{equation*}
    E^{p,q}_{1,\mathscr{W}_I}=\bigoplus_{\mathscr{F}\in \Phi_{p+2}}H^q((\Gamma\cap P)\backslash\tilde{W}^I_\mathscr{F}) \Rightarrow H^{p+q}(\mathscr{W}^\ast_I),
\end{equation*}
where $I=[u^{(i-1)},u^{(i)}]$ or $I=\tau$. Moreover, the map $\psi_I$ induces a map of spectral sequences, which on the $E_1$ page coincides with the restriction map $H^q(\Gamma\backslash\Tilde{W}_I)\rightarrow E^{p,q}_{1,\Tilde{W}_I}$.
\end{theorem}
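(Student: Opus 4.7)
The plan is to apply the standard construction of the spectral sequence of a first-quadrant double complex to $\mathscr{W}^{p,q}_I$, essentially mimicking the discussion of $\mathscr{X}^{p,q}_I$ given immediately before the statement. First I would verify that $\mathscr{W}^{p,q}_I$ really is a first-quadrant double complex: the vertical differential is $(-1)^p$ times the singular coboundary on each summand $C^q((\Gamma\cap P)\backslash \tilde{W}^I_\mathscr{F})$, and the horizontal differential is the alternating sum of restriction maps induced by the inclusions $\tilde{W}^I_{\mathscr{F}'} \hookrightarrow \tilde{W}^I_{\mathscr{F}}$ for $\mathscr{F}' \supseteq \mathscr{F}$. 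These inclusions are the $\tilde{W}^I$-analogues of the face inclusions $\bar{e}(P')\hookrightarrow \bar{e}(P)$ used for $\mathscr{X}^{p,q}_I$, and they exist and are compatible with the $(\Gamma\cap P)$-action thanks to the construction of $\tilde{W}^I_\mathscr{F}$ as a fibration over $I$ with fibre $\tilde{W}_\mathscr{F}(\tau)$. The sign conventions then make the two differentials anticommute, and the bicomplex is first-quadrant because the indexing sets $\Phi_{p+2}$ are empty for $p$ too large and $C^q$ vanishes for $q<0$.

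Next I would filter by column index, $F^r\mathscr{W}^{*,*}_I = \{\mathscr{W}^{p,q}_I \mid p\geq r\}$, producing the type-II spectral sequence. Boundedness of the filtration in the first quadrant gives strong convergence to the total cohomology $H^{p+q}(\mathscr{W}^\ast_I)$. Taking vertical cohomology on each column yields the $E_1$ page
\begin{equation*}
   E^{p,q}_{1,\mathscr{W}_I} = \bigoplus_{\mathscr{F}\in\Phi_{p+2}} H^q\bigl((\Gamma\cap P)\backslash\tilde{W}^I_\mathscr{F}\bigr),
\end{equation*}
where the $(-1)^p$ sign is absorbed in passing to cohomology. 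This gives the asserted spectral sequence for $I = [u^{(i-1)},u^{(i)}]$ or $I = \{\tau\}$ without any modification; the only input specific to $I$ is that the double complex and its differentials are defined pointwise in $\tau$, so the construction goes through uniformly.

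For the second half, I would observe that $\psi_I \colon \hat{\mathscr{W}}^{*,*}_I \to \mathscr{W}^{*,*}_I$ is a map of double complexes: in column $p=0$ it is the product over $\mathscr{F}\in\Phi_2$ of the cochain restrictions induced by the inclusions $\tilde{W}^I_\mathscr{F} \hookrightarrow \tilde{W}^I$, and in columns $p>0$ the source is zero. Compatibility with the vertical differential is immediate (cochain restriction commutes with the coboundary); compatibility with the horizontal differential in the single non-trivial case $\hat{\mathscr{W}}^{0,q}_I \to \mathscr{W}^{1,q}_I$ amounts to the observation that for $\mathscr{F}'\supseteq\mathscr{F}$ the restriction along $\tilde{W}^I_{\mathscr{F}'}\hookrightarrow \tilde{W}^I$ factors through the restriction along $\tilde{W}^I_\mathscr{F}\hookrightarrow \tilde{W}^I$, which cancels in the alternating Čech differential. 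Functoriality of the filtered complex construction then yields a morphism of spectral sequences, and on the $E_1$ page, since $\hat{\mathscr{W}}^{*,*}_I$ is concentrated in column zero with cohomology $H^q(\Gamma\backslash\tilde{W}^I)$, the induced map is precisely the direct sum of cochain-level restriction maps, i.e.\ the restriction map $H^q(\Gamma\backslash\tilde{W}^I) \to E^{0,q}_{1,\mathscr{W}_I}$.

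The only step that requires any care is the verification that $\psi_I$ is a chain map, and in particular that the horizontal Čech-type differential is compatible with pulling cochains back from $\tilde{W}^I$; this is a standard Čech calculation but must be stated carefully because the roles of $\hat{\mathscr{W}}$ (single column) and $\mathscr{W}$ (bicomplex) are asymmetric. Everything else is a direct transcription of the spectral-sequence argument already used for $\mathscr{X}^{p,q}_I$, replacing the Borel--Serre boundary face $\bar{e}(P)$ by the fibered well-rounded piece $\tilde{W}^I_\mathscr{F}$ and invoking the inclusion-reversing bijection between $\mathbb{Q}$-flags and rational parabolics.
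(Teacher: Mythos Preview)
Your proposal is correct and follows the same approach as the paper. The paper's proof is extremely terse: it notes that abutment to $H^{p+q}(\mathscr{W}^\ast_I)$ is immediate from $\mathscr{W}^\ast_I$ being the single complex of $\mathscr{W}^{p,q}_I$, and then refers the second assertion entirely to Theorem~9.5(ii) of \cite{avner_cohomology_1997} with $\psi$ replaced by $\psi_I$. You have simply unpacked what that citation contains---the standard type-II spectral sequence of a first-quadrant double complex, and the verification that $\psi_I$ is a filtration-preserving map of double complexes inducing the restriction map on the $E_1$ page---so there is no substantive difference in method.
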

\begin{proof}
The spectral sequence abuts to $H^{p+q}(\mathscr{W}^\ast_I)$ as claimed because $\mathscr{W}^\ast_I$ is by definition the single complex associated to $\mathscr{W}^{p,q}_I$. The second statement follows by the same arguments as Theorem 9.5(ii) of \cite{avner_cohomology_1997}, but with the chain map $\psi$ of \cite{avner_cohomology_1997} replaced with the chain map $\psi_I$, as defined above.
\end{proof}

\section{Hecke Correspondences}\label{Hecke Correspondences}
The Hecke correspondences we wish to study depend upon certain data, which are summarized concisely by the notion of a Hecke pair.

\begin{defn}
A Hecke pair is a pair $(\Gamma_0,\Delta)$, where $\Gamma_0$ is an arithmetic subgroup of $G$ and $\Delta$ is the sub-semigroup of $G$ consisting of all $a\in G$ such that $L_0a\subseteq L_0$.
\end{defn}

For each $a\in \Delta$, we have the finite index arithmetic group $\Gamma_a=\Gamma_0\cap a^{-1}\Gamma_0a$. Let $\Gamma$ be an arbitrary finite index subgroup of $\Gamma_0$. We have a left multiplication map $m_a:X\rightarrow X$ given by $m_a(x)=ax$ and the identity $\textup{id}_X:X\rightarrow X$. These induce the two maps
\begin{equation*}
\begin{tikzcd}
\Gamma_a\backslash X \arrow[d,bend right, "p"'] \arrow[d, bend left, "q"]\\
\Gamma_0\backslash X
\end{tikzcd}
\end{equation*}
given by $p:\Gamma_a g K\mapsto \Gamma_0 g K$ and $q: \Gamma_a g K\mapsto \Gamma_0 a g K$. The natural extensions (see Lemma~\ref{maToBar} below) of $m_a$ and $\textup{id}_X$ to $\overline{X}$ induce extensions of $p$ and $q$ to the corresponding Borel-Serre compactifications $\Gamma_a\backslash\overline{X}$ and $\Gamma_0\backslash\overline{X}$. 

\begin{remark}
\normalfont We will abuse notation to denote the maps induced on the Borel-Serre compactifications again by $p$ and $q$. We will also sometimes restrict the domain of $p$ or $q$ to certain well-rounded retracts at certain temperaments, and when doing so will denote the restrictions again by $p$ and $q$.
\end{remark}

Let $\rho$ be any left $\mathbb{Z}\Delta$-module \cite[(5.1)]{mcconnell_computing_2020}. In Section \ref{Cohomology at Infinity}, we use singular cohomology, as in \cite[(9.2)]{avner_cohomology_1997}. For the remainder of the paper, we switch to equivariant cohomology, as in \cite{mcconnell_computing_2020}, with coefficients in $\rho$. In relating singular and equivariant cohomology, we must be careful about ``bad'' primes (i.e., those which divide the order of some finite subgroup of $\Gamma_0$), of which there are finitely many, since $\Gamma_0$ is arithmetic \cite[(5.1)]{mcconnell_computing_2020}.
If the module underlying $\rho$ is a vector space over a field of characteristic not equal to any of the finitely many bad primes (for instance, a field of characteristic zero), then $H^\ast_{\Gamma}(X;\rho)$ is canonically isomorphic to $H^\ast(\Gamma\backslash X;\rho)$, and to the group cohomology $H^\ast(\Gamma;\rho)$, for any finite index subgroup $\Gamma$ of $\Gamma_0$. All our results hold equally for each kind of cohomology \cite[(10.4)]{avner_cohomology_1997}.

Applying the cohomology functor to the previous diagram, we get the following diagram in cohomology
\begin{equation*}
    \begin{tikzcd}
H^\ast_{\Gamma_a}(\overline{X}) \arrow[d,bend right, "p_\ast"'] \\
H^\ast_{\Gamma_0}(\overline{X})\arrow[u, bend right, "q^\ast"']
\end{tikzcd}
\end{equation*}
yielding the Hecke operator $T_a$ on $H^\ast_{\Gamma_0}(\overline{X})$ given by $T_a=p_\ast q^\ast$. By \cite[Thm.~5]{mcconnell_computing_2020}, we can compute $T_a$ in finite terms as the composition
\begin{equation*}
    T_a=p_\ast \ell^{(0)\ast} r^{(1)}_\ast\ell^{(1)\ast} r^{(2)}_\ast\cdots \ell^{(k-1)\ast} r^{(k)}_\ast q^\ast.
\end{equation*}

We will need the following lemma for the proof of our main theorem.

\begin{lemma} \label{maToBar}
The natural extension of the map $m_a:X\rightarrow X$ to $\overline{X}$ satisfies $m_a(\partial \overline{X})=\partial \overline{X}$.
\end{lemma}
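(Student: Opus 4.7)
The plan is to reduce the lemma to the well-known fact that left multiplication by any element of $\mathbf{G}(\mathbb{Q})$ extends to a self-homeomorphism of the Borel-Serre bordification $\overline{X}$, and then invoke that a homeomorphism of $\overline{X}$ which carries $X$ to $X$ must also carry $\partial\overline{X}$ to $\partial\overline{X}$.

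First I would verify that $a\in\mathbf{G}(\mathbb{Q})$. This is immediate from the definition of $\Delta$: the lattice $L_0$ contains a $\mathbb{Q}$-basis of $k^n$, so the requirement $L_0a\subseteq L_0$ forces $a$ to send $k^n$ into itself, hence $a$ has rational entries. In particular $a^{-1}\in\mathbf{G}(\mathbb{Q})$ as well (although $a^{-1}$ need not lie in $\Delta$).

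Second, I would recall the standard $\mathbf{G}(\mathbb{Q})$-equivariance of the Borel-Serre bordification: for every $g\in\mathbf{G}(\mathbb{Q})$ and every proper $\mathbb{Q}$-parabolic $P$, left multiplication by $g$ extends from $X$ to $\overline{X}$, and this extension sends the boundary face $e(P)=X\times^{A_P}\{\infty\}^{\ell-1}$ to $e(gPg^{-1})$ via $(x,\bar b)\mapsto(gx,g\bar b g^{-1})$, which is continuous on each corner $X(P)=X\times^{A_P}\overline{A}_P$ and therefore on all of $\overline{X}$. Uniqueness of continuous extensions from the dense subset $X$ identifies this with the ``natural extension'' referred to in the statement of the lemma.

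Finally, applying the previous step to both $a$ and $a^{-1}$ yields two continuous self-maps of $\overline{X}$ that are inverse to each other on $X$ and therefore, by continuity and density, inverse on $\overline{X}$. Hence $m_a$ is a homeomorphism of $\overline{X}$ that restricts to a homeomorphism $X\to X$; it must therefore map the complement $\partial\overline{X}=\overline{X}\setminus X$ bijectively onto itself, giving $m_a(\partial\overline{X})=\partial\overline{X}$. There is no serious obstacle here — the only point requiring care is checking that the definition of $\Delta$ guarantees $a\in\mathbf{G}(\mathbb{Q})$ so that the Borel-Serre machinery applies; once this is in hand, the conclusion is formal.
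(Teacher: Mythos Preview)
Your proposal is correct and follows essentially the same line as the paper's proof: both define the extension of $m_a$ on a boundary face $e(P)$ by $[x,\infty]\mapsto[ax,\infty]$ (you phrase this as the general $\mathbf{G}(\mathbb{Q})$-equivariance of the Borel-Serre bordification, the paper writes it explicitly), and both obtain the equality $m_a(\partial\overline{X})=\partial\overline{X}$ from the existence of the inverse $m_{a^{-1}}$. One small wording issue: when $k\neq\mathbb{Q}$ the condition $L_0a\subseteq L_0$ gives $a\in\textup{GL}_n(k)$, not literally ``rational entries''; but since $\mathbf{G}(\mathbb{Q})=\textup{GL}_n(k)$ for the Weil restriction, your conclusion that $a\in\mathbf{G}(\mathbb{Q})$ is unaffected.
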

\begin{proof}
The boundary components (also called the ``boundary faces'') of $\overline{X}$ are $e(P)=X\times^{A_P}\{\infty\}^{\ell-1}$, for each proper $\mathbb{Q}$-parabolic subgroup $P\subset G$. The equivalence relation on the corner $X(P)$ given by the geodesic action of $A_P$ on $X$ induces one on the boundary face $e(P)$. Denote by $[x,\infty]$ the equivalence class of a point in $e(P)$, where $(x,\infty)\sim (x',\infty)$ if and only if there exists some $b\in A_P$ such that $x=x'\mathbf{o}\; b$ \cite[(5.1)]{borel_corners_1973}. We extend $m_a$ to $\partial \bar{X}$ by declaring that $m_a[x,\infty]=[ax,\infty]$ for any $[x,\infty]\in e(P)$. We have established the inclusion $m_a(\partial \bar{X})\subseteq\partial \bar{X}$. Equality follows from the fact that $m_a$ is invertible with inverse $m_{a^{-1}}$.
\end{proof}

It is worth noting that the double complexes defined in Section~\ref{Cohomology at Infinity} admit a Hecke action, which is the content of the following lemma. This is at each temperament, but the result generalizes naturally.

\begin{lemma}
$T_a$ acts on the Čech double complexes $\mathscr{X}^{p,q}$ and $\mathscr{W}^{p,q}$, commuting with their total differentials.
\end{lemma}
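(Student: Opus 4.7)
The plan is to define $T_a = p_\ast q^\ast$ summand-wise on each double complex and then verify compatibility with the two differentials. First, I would define $q^\ast$ on $\mathscr{X}^{p,q}$: using Lemma~\ref{maToBar}, the map $m_a\times\mathrm{id}_I$ extends continuously to $\overline{X}\times I$ and sends each boundary face $\bar{e}(P)$ to $\bar{e}(aPa^{-1})$, intertwining the $\Gamma_a$-action with the $\Gamma_0$-action. Since conjugation by $a$ sends a flag $\mathscr{F}$ of length $p+2$ to a flag $a\mathscr{F}$ of the same length, this induces a pullback of cochains on each summand. Next, I would define $p_\ast$ as a transfer map: the natural projection from $(\Gamma_a\cap P)\bs(\bar{e}(P)\times I)$ to $(\Gamma_0\cap P)\bs(\bar{e}(P)\times I)$ is a finite covering (by the finite index of $\Gamma_a$ in $\Gamma_0$), and the transfer is defined as usual by summing a cochain over the finite fiber. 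Composing gives $T_a$ on $\mathscr{X}^{p,q}$. The analogous recipe defines $T_a$ on $\mathscr{W}^{p,q}$ after replacing $\bar{e}(P)$ with $\tilde{W}^I_\mathscr{F}$; here one uses that the deformation retraction $D(\tau,t)$ from Theorem~\ref{thmD} is $\Gamma_0$-equivariant and constant on the $\overline{A}_P$-fibers of $\overline{N}_\mathscr{F}(\tau)$, so it descends compatibly through both legs of the Hecke correspondence.

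Commutation with the vertical differential is then immediate, since both pullback and transfer of singular cochains commute with the singular coboundary, and the sign $(-1)^p$ is constant on each summand. Commutation with the horizontal differential reduces to two compatibilities. First, conjugation by $a$ preserves the poset of $\mathbb{Q}$-parabolic subgroups, so the inclusions $\bar{e}(P')\hookrightarrow\bar{e}(P)$ indexed by $\mathscr{F}'\supseteq\mathscr{F}$ are carried to the inclusions $\bar{e}(aP'a^{-1})\hookrightarrow\bar{e}(aPa^{-1})$, making $q^\ast$ a chain map in the horizontal direction. Second, the inclusion $\Gamma_a\hookrightarrow\Gamma_0$ is compatible with taking stabilizers of flags, so the transfer $p_\ast$ also commutes with the horizontal differential by the standard naturality of transfer under morphisms of coverings.

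The main obstacle is the combinatorial bookkeeping: one must choose representatives of flag classes modulo $\Gamma_a$ and $\Gamma_0$ coherently and verify that the induced pullback and transfer are well defined on equivalence classes (independent of the choice of coset representatives). This is precisely the argument carried out in \cite[\S 10]{avner_cohomology_1997} at a single temperament, and the well-tempered generalization adds no new difficulty, since the maps $m_a$ and $\mathrm{id}_X$ act on $\overline{X}\times I$ only through the first factor and leave $\tau$ fixed. In particular, the same argument applies whether $I=[u^{(i-1)},u^{(i)}]$ is an interval between consecutive critical temperaments or a single point $I=\{\tau\}$, which is what the concluding remark in the statement anticipates.
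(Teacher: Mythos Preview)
Your proposal is correct and follows essentially the same approach as the paper: define $T_a=p_\ast q^\ast$ summand-wise, then verify separately that both maps commute with the vertical coboundary and with the horizontal differential induced by inclusions of boundary faces (respectively of the subcomplexes $\tilde W_{\mathscr{F}}$), using that conjugation by $a$ preserves the inclusion poset of $\mathbb{Q}$-parabolics. Your write-up is somewhat more explicit than the paper's---you spell out the transfer description of $p_\ast$ and the bookkeeping of flag representatives, and you invoke Theorem~\ref{thmD} to justify the action on $\mathscr{W}^{p,q}$---but the underlying argument is the same.
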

\begin{proof}
In order to prove that $T_a$ commutes with the total differential of $\mathscr{X}^{p,q}$, we will show that it commutes with the usual coboundary maps on cochains and with the horizontal differential induced by the inclusions $\bar{e}(P')\hookrightarrow \bar{e}(P)$. Although $T_a$ permutes boundary faces (see Lemma~\ref{maToBar}), it preserves their inclusions, so it commutes with the horizontal differential of $\mathscr{X}^{p,q}$. The map $q^\ast$ is natural on cohomology, so commutes with the vertical differential. This is also true for the map $p_\ast$, since $p$ is a projection, hence a finite morphism, and so $p_\ast$ is exact on cochains. Thus, the composition commutes with the vertical differential.
The vertical differential of $\mathscr{W}^{p,q}$ is also the usual coboundary map, with which we have just shown $T_a$ to commute. The horizontal differential is induced by the inclusions $\Tilde{W}_{\mathscr{F}'}\hookrightarrow \Tilde{W}_{\mathscr{F}}$ (order reversing from inclusions of rational flags), with which $m_a$ and $\textup{id}_X$ commute (again, both preserve inclusions). Thus, $T_a$ also commutes with the horizontal differentials on $\mathscr{W}^{p,q}$, and hence the total differential.
\end{proof}


We are now ready to state our main theorem. This result is a generalization of [\cite{avner_cohomology_1997}, (9.5i)] to the context of the well-tempered complex. A homological version of Theorem 9.3 can be proven by essentially the same techniques (c.f. [\cite{avner_cohomology_1997}, (10.2i)]). For convenience, we will adopt the notation $I_{u^{(i)}}=[u^{(i-1)},u^{(i)}]$.

\begin{theorem} \label{mainThmCubes}
At each step of the composition 
\begin{equation*}
    T_a=p_\ast \ell^{(0)\ast} r^{(1)}_\ast\ell^{(1)\ast} r^{(2)}_\ast\cdots \ell^{(k-1)\ast} r^{(k)}_\ast q^\ast
\end{equation*}
we have a commutative cube. There are three types: type $\textbf{L-R}$ involving the left and right inclusions; type $\textbf{P}$ involving the map $p$; and type $\textbf{Q}$ involving the map $q$. In each diagram, the vertical maps in the foreground and background are natural isomorphisms. These commutative diagrams are as follows, listed by type.
\newpage

\begin{center}
    \textbf{Type (L-R)}
\end{center}
\begin{equation*}
   \begin{tikzcd}[row sep=2 em, column sep =0.005em]
    H^\ast_{\Gamma_0}(\partial\overline{X}\times I_{u^{(i)}})\arrow[dd, near start, "\cong"'] \arrow[rr,"\ell^{(i-1)\ast}"] && H^\ast_{\Gamma_a}(\partial\overline{X}\times \{u^{(i-1)}\}) \arrow[dd,near start,"\cong"] \\
    & H^\ast_{\Gamma_0}(\overline{X}\times I_{u^{(i)}}) \arrow[rr,crossing over, near start, "\ell^{(i-1)\ast}"]\arrow[ul, "rest."'] && H^\ast_{\Gamma_a}(\overline{X}\times \{u^{(i-1)}\}) \arrow[dd,"\cong"]\arrow[ul,"rest."']\\
    H^\ast_{\Gamma_0}(\mathscr{W}^\ast_{I_{u^{(i)}}}) \arrow[rr] && H^\ast_{\Gamma_a} (\mathscr{W}^\ast_{u^{(i-1)}}) \\
    & H^\ast_{\Gamma_0}(\tilde{W}_{I_{u^{(i)}}})\arrow[from=uu, crossing over, near start, "\cong"]\arrow[ul, "\psi^\ast"]\arrow[rr, "\ell^{(i-1)\ast}"'] && H^\ast_{\Gamma_a}(\tilde{W}_{u^{(i-1)}})\arrow[ul,"\psi^\ast"']
    \end{tikzcd}
\end{equation*}
\vspace{0.5cm}

\begin{center}
    \textbf{Type (P)}
\end{center}
\begin{equation*}
    \begin{tikzcd}[row sep=scriptsize, column sep = 0.5em]
    H^\ast_{\Gamma_0}(\partial\overline{X}\times \{1\})\arrow[dd, near start, "\cong"'] \arrow[rr, "\textup{id}^\ast_X"] && H^\ast_{\Gamma_0}(\partial\overline{X}\times \{1\}) \arrow[dd, near start, "\cong"] \\
    & H^\ast_{\Gamma_0}(\overline{X}\times \{1\}) \arrow[rr, crossing over, near start, "\textup{id}^\ast_X"]\arrow[ul, "rest."'] && H^\ast_{\Gamma_0} (\overline{X}\times \{1\}) \arrow[dd,"\cong"]\arrow[ul,"rest."']\\
    H^\ast_{\Gamma_0}(\mathscr{W}^\ast) \arrow[rr] && H^\ast_{\Gamma_0} (\mathscr{W}^\ast) \\
    & H^\ast_{\Gamma_0}(\tilde{W})\arrow[from=uu, crossing over, near start, "\cong"]\arrow[ul, "\psi^\ast"]\arrow[rr, "p_\ast"'] && H^\ast_{\Gamma_0}(\tilde{W})\arrow[ul, "\psi^\ast"']
    \end{tikzcd}
\end{equation*}
\vspace{0.5cm}

\begin{center}
    \textbf{Type (Q)}
\end{center}
\begin{equation*}
    \begin{tikzcd}[row sep=scriptsize, column sep = 0.5em]
    H^\ast_{\Gamma_0}(\partial\overline{X}\times \{1\})\arrow[dd, near start, "\cong"'] \arrow[rr, "m^\ast_a"] && H^\ast_{\Gamma_a}(\partial\overline{X}\times \{\tau_0\}) \arrow[dd, near start, "\cong"] \\
    & H^\ast_{\Gamma_0}(\overline{X}\times \{1\}) \arrow[rr, crossing over, near start, "m^\ast_a"]\arrow[ul, "rest."'] && H^\ast_{\Gamma_a} (\overline{X}\times \{\tau_0\}) \arrow[dd,"\cong"]\arrow[ul,"rest."']\\
    H^\ast_{\Gamma_0}(\mathscr{W}^\ast) \arrow[rr] && H^\ast_{\Gamma_a} (\mathscr{W}^\ast_{\tau_0}) \\
    & H^\ast_{\Gamma_0}(\tilde{W})\arrow[from=uu, crossing over, near start, "\cong"]\arrow[ul, "\psi^\ast"]\arrow[rr, "q^\ast"'] && H^\ast_{\Gamma_a}(\tilde{W}_{\tau_0})\arrow[ul, "\psi^\ast"']
    \end{tikzcd}
\end{equation*}

\end{theorem}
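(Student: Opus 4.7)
The plan is to verify each of the three cubes face by face, bootstrapping on the two mechanisms that make the vertical maps isomorphisms and on the naturality properties already established. The front-vertical isomorphisms $H^\ast_{\Gamma}(\overline{X}\times I)\cong H^\ast_{\Gamma}(\tilde{W}_I)$ are supplied by the $\Gamma_0$-equivariant deformation retraction $D(\tau,t)$ of Theorem~\ref{thmD}, restricted to any subinterval $I\subseteq[1,\tau_0]$. The back-vertical isomorphisms $H^\ast_{\Gamma}(\partial\overline{X}\times I)\cong H^\ast_{\Gamma}(\mathscr{W}^\ast_I)$ follow from the $\psi_I$-comparison theorem of Section~\ref{Cohomology at Infinity}: the induced map on the $E_1$ page consists of the restriction maps $H^q((\Gamma\cap P)\backslash(\bar e(P)\times I))\to H^q((\Gamma\cap P)\backslash\tilde W^I_\mathscr{F})$, each of which is an isomorphism by the $(\Gamma_0\cap P)$-equivariant homotopy equivalence $\bar e(P)\simeq\tilde W_\mathscr{F}$ proved at the end of Section~\ref{Extending to the Boundary}, extended fibrewise over $I$. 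By the standard comparison theorem for spectral sequences, $\psi^\ast_I$ is then an isomorphism on abutments.

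With the vertical arrows understood, I would attack Type (L-R) first. All six horizontal or bottom-diagonal arrows are restriction maps induced by the inclusion of the left endpoint $\{u^{(i-1)}\}\hookrightarrow I_{u^{(i)}}$ combined with restriction along $\Gamma_a\hookrightarrow\Gamma_0$. Each face then commutes by naturality of equivariant cohomology under subspace inclusion, applied separately to $\overline{X}$, $\partial\overline{X}$, $\tilde{W}^+$, and the Čech double complex $\mathscr{W}^{p,q}$. The adaptation of [\cite{avner_cohomology_1997}, Thm.~9.5(ii)] carried out in our spectral sequence theorem shows that $\psi^\ast_I$ intertwines the restriction maps on $\overline{X}$ and on $\mathscr{W}^\ast_I$, which handles the two ``vertical'' side faces of the cube. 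The top and bottom squares commute trivially, while the back face commutes by functoriality of the $\mathscr{X}_I^{p,q}\to \mathscr{W}_I^{p,q}$ chain map, in which neither the Čech structure nor the cochains depend on the specific subinterval.

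For Type (P) and Type (Q), the horizontal arrows additionally incorporate the Hecke data $p$ and $q$ respectively. Here the relevant naturality is that $p$ (the finite covering map induced by $\Gamma_a\subseteq\Gamma_0$) and $m_a$ (which by Lemma~\ref{maToBar} extends to $\overline{X}$ and carries $\partial\overline{X}$ to itself) are both compatible with $\overline{R}_\tau$ and with the inclusions $\bar e(P')\hookrightarrow \bar e(P)$ that define the horizontal Čech differential. The lemma above showing that $T_a$ commutes with the total differentials of $\mathscr{X}^{p,q}$ and $\mathscr{W}^{p,q}$ gives compatibility of $m_a^\ast$, $p_\ast$, $\textup{id}_X^\ast$ with the abutment isomorphisms, yielding commutativity of the back and bottom squares. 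The front square is [\cite{mcconnell_computing_2020}, Thm.~5] applied to the endpoint slices, and the two ``vertical'' side faces follow once more from the parametrized naturality of $\psi^\ast$ and from the fact that each restriction to the slice at temperament $1$ or $\tau_0$ is compatible with $p$ and $m_a$ by Lemma~\ref{maToBar}.

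I expect the hard part to be the verification of the two side faces connecting the top square to the bottom square in Types (P) and (Q), because on the $\mathscr{W}^{p,q}$ side the Hecke operator permutes the $\bar e(P)$-summands via conjugation, while on the $\tilde W$ side the same operator is realized through the slice $\Tilde{W}_\mathscr{F}(\tau_0)$. Matching these two descriptions requires showing that the homotopy equivalences $\bar e(P)\simeq\tilde W_\mathscr{F}$ from the last proposition of Section~\ref{Extending to the Boundary} are themselves $\Delta$-equivariant up to the conjugation action, or equivalently that the tubular neighborhoods $\overline{N}_\mathscr{F}(\tau)$ transform correctly under $m_a$. Once this compatibility is checked on a single fibre over $\tau$, continuity in $\tau$ (Proposition~\ref{barR}) propagates it over the whole interval, and the three cubes commute.
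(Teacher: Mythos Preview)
Your proposal is correct and follows essentially the same approach as the paper: establish the vertical isomorphisms via the deformation retraction $D(\tau,t)$ (foreground) and the spectral-sequence comparison for the Čech double complexes $\mathscr{X}^{p,q}_I\to\mathscr{W}^{p,q}_I$ (background), then verify each remaining face by naturality under restriction (Type \textbf{L-R}) and under $m_a$ and $\textup{id}_X$ (Types \textbf{P} and \textbf{Q}). Where you flag the ``hard part'' and sketch the needed $\Delta$-equivariance of the $\bar e(P)\simeq\tilde W_\mathscr{F}$ equivalences, the paper proceeds more concretely: it writes out the chain-level formula for $\psi$ as a sum over $\mathscr{F}\in\Phi_2$ of restrictions to $(\Gamma_0\cap P)\backslash\tilde W_\mathscr{F}(\tau)$ and observes directly (via Lemma~\ref{maToBar}) that $m_a$ merely permutes the summands, leaving the sum unchanged---so the bottom face of the Type \textbf{Q} cube commutes on the nose at the chain level, without appealing to a further equivariance check.
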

\begin{proof}
For convenience, let us call the cubes involving left and right inclusions cubes of type \textbf{L} and \textbf{R}, respectively, those involving the map $q$ cubes of type \textbf{Q}, and those involving the map $p$ cubes of type \textbf{P}. We shall prove that the faces of the cubes of each type commute. Observe first that the vertical isomorphisms in the foreground for the cubes of each type are induced from our deformation retract $D(\tau,1)$ at the appropriate temperament or family of temperaments. Applying Theorem 9.5(i) of \cite{avner_cohomology_1997}, we see that the background and foreground vertical maps are natural isomorphisms and that the diagrams forming the left and right faces of each cube commute. It remains to analyze, case by case, the top, bottom, background, and foreground faces.
\vspace{2mm}

\noindent
\textbf{Type L-R.} We present the proof for the cubes of type \textbf{L}. The proof for the cubes of type \textbf{R} is essentially the same. The top face of each cube of Type \textbf{L} clearly commutes, as we simply have compositions of restriction maps. Consider now the back faces. Our global deformation retraction induces a filtration-preserving chain map $\mathscr{X}^{p,q}_{I}\rightarrow \mathscr{W}^{p,q}_{I}$ and furthermore a map of spectral sequences. Since this is an isomorphism on the $E_1$ page (see \cite{avner_cohomology_1997}, the proof of Theorem 9.5(i)), we know by \cite[Prop.~2.6]{brown_cohomology_1982} that the induced map on cohomology is an isomorphism, which forms the background vertical arrows for the appropriate choice of $I$. This map commutes at the chain level with restriction, so the background square commutes. The vertical maps in the foreground square are isomorphisms induced by the chain map $\hat{\mathscr{X}}^{p,q}_I\rightarrow\hat{\mathscr{W}}^{p,q}_I$ defined by our deformation retraction. This map commutes on the chain level with restriction, so the foreground squares commute. It remains to address the bottom face. From Section~\ref{Extending to the Boundary}, we have a map $\psi: \hat{\mathscr{W}}^{p,q}_I\rightarrow \mathscr{W}^{p,q}_I$, which induces the left and right arrows of the bottom face. Again, this map commutes on the chain level with restriction, so the bottom square commutes.
\vspace{2mm}

\noindent
\textbf{Types P and Q.} We present the proof for the cubes of type \textbf{Q}. The proof is analogous to that for the the cubes of type \textbf{P}, only simpler as the map $p$ is induced by the identity map (which, in particular, does extend continuously to $\overline{X}$). By definition, the map $m_a:\overline{X}\rightarrow\overline{X}$ takes the interior of $\overline{X}$ to itself. Thus, it follows from Lemma~\ref{maToBar} that $m_a$ commutes with restriction to $\partial X$, and so the top face of the corresponding diagram commutes. This demonstrates that the top face commutes. At the chain level for a given temperament $\tau$, the map $\psi$ behaves as
\begin{equation*}
    \omega\in C^q(\Gamma_0\backslash\Tilde{W}_\tau)\mapsto \sum_{\mathscr{F}\in\Phi_2}\omega|_{(\Gamma_0\cap P)\backslash\Tilde{W}_\mathscr{F}(\tau)}\in\bigoplus_{\mathscr{F}\in\Phi_2}C^q((\Gamma_0\cap P)\backslash\Tilde{W}_\mathscr{F}(\tau))
\end{equation*}
for $p=0$ and is the zero map for $p>0$, since the double complex $\hat{\mathscr{W}}^{p,q}$ is by definition zero in those degrees \cite[571--572]{avner_cohomology_1997}. By \cite[Thm.~4]{mcconnell_computing_2020}, we know $m_a(\Tilde{W})=\Tilde{W}_{\tau_0}$ and $q=m_a|_{\Tilde{W}}$ (indeed, $m_a$ gives a homeomorphism of cell complexes from the first to the last temperament; c.f. remark \ref{ssAtTau} below). By the observation in Lemma~\ref{maToBar}, we see that $m_a$ will permute the $\Tilde{W}_\mathscr{F}(\tau)$, but the sum above remains unchanged. Thus, $\psi$ and $q$ commute on the chain level and we conclude that the bottom face commutes. Finally, recall that $m_a(\Tilde{W})=\Tilde{W}_{\tau_0}$, $\Tilde{W}=\textup{im}(D(1,t))$, and $\Tilde{W}_{\tau_0}=\textup{im}(D(\tau_0,t))$, by Theorem~\ref{thmD}. Thus, on the image of $D(1,t)$, the map $q$ is just exactly multiplication by $a$, whose image is $\Tilde{W}_{\tau_0}$, and this is precisely the image of $D(\tau_0,t)$. It follows that the front and back faces commute.
\end{proof}

It is useful to note that the statement of Theorem 9.3 holds if one replaces $\Gamma_0$ with $\Gamma_a$ and $\Gamma_a$ with the finite index subgroup $\Gamma\cap\Gamma_a$.

\begin{remark} \label{ssAtTau}
\normalfont As remarked on \cite[20]{mcconnell_computing_2020}, Theorem~4 of \cite{mcconnell_computing_2020} gives a homeomorphism of cell complexes $m_{a^{-1}}:\tilde{W}_{\tau_0}\rightarrow\tilde{W}_1$ from the last temperament to the first temperament, given by multiplication by $a^{-1}$. Using this homeomorphism, one can promote the fibration $\tilde{W}^+\rightarrow [1,\tau_0]$ to a nontrivial fibration $T(\tilde{W}^+)\rightarrow S^1$ over the circle, where the fibres over the first and last temperaments are glued together via $m_{a^{-1}}$. The corresponding Serre spectral sequence is

\begin{equation*}
    E^{p,q}_2=H^p(S^1, H^q(F))\Rightarrow H^{p+q}(T(\tilde{W}^+)),
\end{equation*}
where $F$ is the homotopy fibre of the fibration $T(\tilde{W}^+)\rightarrow S^1$. Thus, one is free to compute the above spectral sequence taking $F=\tilde{W}_\tau$ for any $\tau\in [1,\tau_0]$.
\end{remark}

\section{Comments on Implementation}\label{Comments on Implementation}

We remark on how to work out the spectral sequences in
Theorem~\ref{mainThmCubes} on a computer.

In \cite[(10.1)]{avner_cohomology_1997}, the primary goal was to
compute the homology and cohomology of $\Gamma\bs \tilde W$ and its
subcomplexes $(\Gamma \cap P)\bs \tilde W_\mathscr{F}$.  These are
finite cell complexes.  The paper proposed to store a cell of $(\Gamma
\cap P)\bs \tilde W_\mathscr{F}$ in the computer as a cell~$\sigma$ of
$\Gamma\bs \tilde W$ together with extra structure depending
on~$\mathscr{F}$.  However, this extra structure is delicate.  For a
given~$\sigma$, more than one flag~$\mathscr{F}$ may satisfy
$\sigma\in \tilde W_\mathscr{F}$, and the associated parabolics could
intersect in a variety of ways.  Programs using these ideas proved to
be error-prone.

Equivariant cohomology is a more natural setting for computations of
group cohomology.
As remarked in Section \ref{Hecke Correspondences}, the coefficient
module $\rho$ is over a field of characteristic zero or of
characteristic~$p$ prime to the order of all the torsion elements
of~$\Gamma$, then the two kinds of cohomology are canonically
isomorphic.  When~$p$ does divide the order of torsion elements
of~$\Gamma$, \cite{AGM5} studied the relations between the two kinds
of cohomology in our setting---that is, when the cells~$\sigma$ belong
to the well-rounded retract for $\textup{SL}_n(\mathbb{Z})$ for
small~$n$.  In~\cite{ash_cohomology_2020}, the authors applied the
well-rounded retract for $\textup{SL}_4(\mathbb{Z})$ to computation of
non-constant (twisted) coefficient systems.  Changing from ordinary to
equivariant cohomology in~\cite{ash_cohomology_2020} made the
programming much more tractable, as we now describe.

In general, equivariant cohomology is computed by a spectral sequence
\cite[(VII.7.7)]{brown_cohomology_1982}.  Each spectral sequence in
Theorem~\ref{mainThmCubes} is, in principle, a spectral sequence of
spectral sequences \cite[(10.4)]{avner_cohomology_1997}.  To simplify matters, assume for the rest of this
section that the characteristic is indeed zero or prime to the torsion
in~$\Gamma$.  Then the equivariant cohomology spectral sequence
reduces at~$E_1$ to a single complex.  This is the approach
of~\cite{ash_cohomology_2020}, and it is how we plan to implement
Theorem~\ref{mainThmCubes}.

One of us (MM) has written two bodies of code for computations with
equivariant cohomology.  The first, begun in 2013, is written in Sage
\cite{sage}.  It implements equivariant cohomology of a group~$\Gamma$
acting on any contractible cell complex~$\tilde W$ with finitely many
cells mod~$\Gamma$ and with finite stabilizers of cells.  The
coefficient module can be any $\mathbb{Z}\Delta$-module on a
finite-dimensional vector space over fields like~$\mathbb{Q}$ and
$\mathbb{F}_p$.  In practice, we have used the code with
appropriate~$\tilde W$ for subgroups of $\textup{SL}_n(\mathbb{Z})$
for $n=2,3,4$ as well as of $\textup{Sp}_4(\mathbb{Z})$.  The
implementation is described in more detail in~\cite{ash_cohomology_2020}.

The Sage code also has a complete set of functions for computing Hecke
operators on $H_\Gamma^*(\tilde W; \mathscr{M})$ by the algorithm
in~\cite{mcconnell_computing_2020}.  See Sections~5--6
of~\cite{mcconnell_computing_2020} for the implementation and
Section~7 for results for a range of $\Gamma \subseteq
\textup{SL}_3(\mathbb{Z})$.

The second body of code is written in Haskell and was begun in 2021.
Haskell offers strong static typing, purity (immutable data), and
non-strict evaluation (a computation is not performed if it is not
needed).  This combination makes it ideal for writing code that looks
and feels like mathematics.

In the Haskell code, we have a new implementation of equivariant
cohomology.  We explain it for $\tilde W$, though it applies in the
same way to every $\tilde W_\tau$ and to $\tilde W^+$.  In
computations for the series of papers \cite{AGM1} through
\cite{ash_cohomology_2020}, the code would begin by choosing a set of
representatives~$\sigma$ of the cells of $\tilde W$ mod~$\Gamma$.  It
would do all its computations using those~$\sigma$, creating tables of
auxiliary data restricted to just those~$\sigma$.  Questions of
which~$\gamma$ in the stabilizer of~$\sigma$ would preserve or reverse
the orientation of~$\sigma$ were especially delicate.  Even worse, the
set of representatives for $(\Gamma\cap P)\bs \tilde W_\mathscr{F}$
will be larger than the set of those representatives for $\Gamma\bs
\tilde W$ which happen to lie in $\tilde W_\mathscr{F}$.  As we
remarked above, it is awkward to fix representatives~$\sigma$ for
each~$\mathscr{F}$ separately.  Instead, in the new implementation, we
store a single finite closed subcomplex of $\tilde W$ called a
\emph{chunk}~$\mathcal{C}$ of~$\tilde W$.  The chunk is large enough
to contain representatives for every orbit mod~$\Gamma$ and at the
same time for every orbit mod $(\Gamma\cap P)$.  

We construct the
chunk by walking through~$\tilde W$ in a breadth-first way: start at
any vertex, find all the top-dimensional cells surrounding that
vertex, find all the vertices of those top cells, find all the top
cells surrounding those new vertices, and so on.  During the
breadth-first walk, we check which cells are equivalent to each other
modulo the various $\Gamma\cap P$.  The walk stops when a complete set
of inequivalent cells has been found.  $\mathcal{C}$ is stored as a
ranked poset of cells, where the partial ordering encodes the face
relations.  The chunk approach makes computations easier because,
being a closed subcomplex, it stores~$\sigma$ together with \emph{all}
$\sigma$'s faces, not just representatives of those faces.  For
example, when $\gamma\in\Gamma$ stabilizes $\sigma$, we can
immediately see how~$\gamma$ acts on the orientation: we simply look
at the elements below~$\sigma$ in the ranked poset and note
how~$\gamma$ permutes them.

In the present paper, when (because of our assumption on the
characteristic) the equivariant cohomology of each $\tilde
W_\mathscr{F}$ is a single complex, the spectral sequences in
Theorem~\ref{mainThmCubes} will come from double complexes.  We will
program up these double complexes, computing their~$d_1$, $d_2$, and
higher differentials.  To compute a Hecke operator~$T_a$, we will use
the methods of \cite[(5.3)]{mcconnell_computing_2020}, simply
replacing single complexes with double complexes.  The
quasi-isomorphisms and $p_*$ and $q^*$ will be maps of spectral
sequences, that is, maps of double complexes which induce maps on the
associated spectral sequences.  The final result will be the
$E_\infty$ page of the equivariant cohomology of~$\Gamma$ together
with its decomposition into Hecke eigenspaces.


\nocite{borel_corners_1973}
\nocite{voronoi_sur_1908}
\newpage
\printbibliography

\end{document}